\newcommand{\qb}[2]{{\left [{#1 \atop #2} \right]}}
\newlength{\standardunitlength}
\newtheorem{prop}{Proposition}[section]
\newtheorem{defn}[prop]{Definition}
\newtheorem{cor}[prop]{Corollary}
\newtheorem{theorem}[prop]{Theorem}
\newtheorem{example}[prop]{Example}
\title {Marking and shifting a part in partition theorems }
\author{Kathleen O'Hara}
\address{4356 Preston Forest Drive\\
Blacksburg,  VA 24060}\email{ohara.kathy1@gmail.com}
\author{Dennis Stanton}
\address{School of Mathematics\\
University of Minnesota\\
Minneapolis, MN 55455} \email{stanton@math.umn.edu}
\keywords{}
\date{September 7, 2018}
\begin{document}

\begin{abstract} Refined versions, analytic and combinatorial, are given for 
classical integer partition theorems. The examples include the Rogers-Ramanujan 
identities, the G\"ollnitz-Gordon identities, Euler's odd=distinct theorem, and 
the Andrews-Gordon identities. Generalizations of each of these theorems are given where a 
single part is ``marked" or weighted. This allows a single 
part to be replaced by a new larger part, ``shifting" a part, 
and analogous combinatorial results are given in each case. 
Versions are also given for marking a sum of parts.
\end{abstract}

\maketitle

\section{Introduction}

Many integer partition theorems can be restated as an analytic identity, as a sum 
equal to a product. One such example is the first Rogers-Ramanujan identity
\begin{equation}
\label{RR1}
\frac{1}{\prod_{k=0}^\infty (1-q^{5k+1})(1-q^{5k+4})}=
1+\sum_{k=1}^{\infty} q^{k^2}\frac{1}{(1-q)(1-q^2)\cdots (1-q^k)}.
\end{equation}

MacMahon's combinatorial version of  \eqref{RR1} 
uses integer partitions. The left side is the generating function 
for all partitions whose parts are congruent to $1$ or $4\mod 5.$
The factor $1/(1-q^9)$ on the left side allows an arbitrary number of $9$'s in an integer 
partition.  
If we ``mark" or weight the $9$ by an $x$, the factor $1/(1-q^9)$ is replaced by
$$
\frac{1}{1-xq^9}.
$$

One may ask how the right side is modified upon marking a part, and whether a 
refined combinatorial interpretation exists.

The result is known \cite[(2.2)]{O:S}, and there is a refined combinatorial version. 
The key to the combinatorial result is that the terms in the sum side are 
positive as power series in $q$ and $x$.

\begin{theorem} 
\label{RR1x}
Let $M\ge 1$ be any integer congruent to $1$ or $4\mod 5$. Then
$$
\begin{aligned}
\frac{1-q^M}{1-xq^M}&\frac{1}{\prod_{k=0}^\infty (1-q^{5k+1})(1-q^{5k+4})}=
1+q\frac{1+q+\cdots+q^{M-2}+xq^{M-1}}{1-xq^M}\\
+&\sum_{k=2}^{\infty} q^{k^2} \frac{1+q+\cdots +q^{M-1}}{1-xq^M}\frac{1}{(1-q^2)(1-q^3)\cdots (1-q^k)}.
\end{aligned}
$$
\end{theorem}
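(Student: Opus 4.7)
The plan is to deduce this identity directly from the Rogers--Ramanujan identity \eqref{RR1} by eliminating the $x$-dependence. First, multiply both sides of the claimed identity by the factor $(1-xq^M)/(1-q^M)$. The left side then collapses to the Rogers--Ramanujan product $\prod_{k=0}^{\infty} 1/((1-q^{5k+1})(1-q^{5k+4}))$, which is the left side of \eqref{RR1}.

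On the right side, every occurrence of $1/(1-xq^M)$ becomes $1/(1-q^M)$, eliminating the parameter $x$ from all summands with $k\ge 2$. Using $1+q+\cdots+q^{M-1} = (1-q^M)/(1-q)$, each such term collapses to the corresponding term $q^{k^2}/((1-q)(1-q^2)\cdots(1-q^k))$ in \eqref{RR1}.

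The step I expect to be the crux, and the reason for the specific numerator $1+q+\cdots+q^{M-2}+xq^{M-1}$ in the $k=1$ summand, is the handling of the ``$1$'' term together with the $k=1$ term. After the multiplication, their combined contribution is
$$\frac{(1-xq^M) + q(1+q+\cdots+q^{M-2}+xq^{M-1})}{1-q^M},$$
and the $x$-dependent pieces in the numerator cancel, since $-xq^M + xq\cdot q^{M-1} = 0$. What remains is $(1+q+\cdots+q^{M-1})/(1-q^M) = 1/(1-q)$, which equals $1 + q/(1-q)$, matching the ``$1$'' and $k=1$ pieces on the right side of \eqref{RR1}. Thus the right side of the claimed identity, multiplied by $(1-xq^M)/(1-q^M)$, is precisely the right side of \eqref{RR1}, and \eqref{RR1} finishes the proof. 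In fact the argument is purely algebraic and the hypothesis $M \equiv 1,4 \pmod 5$ plays no role in the analytic identity; it is only needed so that marking a part of size $M$ is combinatorially meaningful on the product side.
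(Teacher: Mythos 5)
Your proof is correct and is essentially the same algebra as the paper's: the paper obtains Theorem~\ref{RR1x} from the general expansion Proposition~\ref{mainprop} (with $\alpha_j=q^{j^2}$), which distributes the factor $(1-q^M)/(1-xq^M)$ term by term across the sum side of \eqref{RR1} using $\frac{1-q^M}{(1-q)(1-xq^M)}=\frac{[M]_q}{1-xq^M}$; you run the identical term-by-term computation in the reverse direction. Your closing observation that the congruence condition on $M$ is irrelevant to the analytic identity and only matters for the combinatorial marking is also consistent with the paper (see Section 6).
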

\vskip10pt\noindent

Here is a combinatorial version of Theorem~\ref{RR1x}.

\begin{theorem} 
\label{RRxcomb1}
Let $M$ be positive integer which is congruent to $1$  or $4\mod 5$. Then the
number of partitions of $n$ into parts congruent to $1$ or $4\mod 5$ with exactly $k$ $M$'s
is equal to the number of partitions $\lambda$ of $n$ with difference at 
least $2$ and
\begin{enumerate}
\item if $\lambda$ has one part, then $\lfloor n/M\rfloor=k,$
\item if $\lambda$ has at least two parts, then $\lfloor(\lambda_1-\lambda_2-2)/M\rfloor=k.$
\end{enumerate}
\end{theorem}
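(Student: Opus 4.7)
The plan is to reinterpret both sides of Theorem~\ref{RR1x} combinatorially and then read off Theorem~\ref{RRxcomb1} from the equality of coefficients of $x^k q^n$.

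First, I would observe that the left-hand side of Theorem~\ref{RR1x} is the generating function $\sum_\sigma x^{m_M(\sigma)} q^{|\sigma|}$, summed over partitions $\sigma$ with parts congruent to $1$ or $4\mod 5$, where $m_M(\sigma)$ denotes the multiplicity of $M$ in $\sigma$. Since $M\equiv 1$ or $4\mod 5$, the factor $(1-q^M)$ appears in the denominator product, and multiplication by $(1-q^M)/(1-xq^M)$ replaces that single factor by $1/(1-xq^M)=\sum_{j\ge 0} x^j q^{jM}$, which marks the number of parts equal to $M$ by powers of $x$.

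Next, I would show term by term that the right-hand side of Theorem~\ref{RR1x} is the generating function $\sum_\lambda x^{c(\lambda)} q^{|\lambda|}$, summed over partitions $\lambda$ with difference at least $2$, where $c(\lambda)$ is the statistic defined in Theorem~\ref{RRxcomb1} (with $c(\emptyset)=0$ handled by the leading $1$). For the one-part case, compute $\sum_{\lambda_1\ge 1} x^{\lfloor \lambda_1/M\rfloor} q^{\lambda_1}$ by writing $\lambda_1 = jM + r$ with $0\le r<M$ (excluding $(j,r)=(0,0)$), and verify that the sum equals $q(1+q+\cdots+q^{M-2}+xq^{M-1})/(1-xq^M)$. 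For $k\ge 2$ parts, apply the standard substitution $\mu_i=\lambda_i-(2(k-i)+1)$, which sends a partition $\lambda$ with $k$ parts and difference $\ge 2$ to a partition $\mu$ with at most $k$ parts, with $|\lambda|=k^2+|\mu|$ and, crucially, $\lambda_1-\lambda_2-2=\mu_1-\mu_2$. Then use the successive-difference substitution $a_i=\mu_i-\mu_{i+1}$ for $i<k$ and $a_k=\mu_k$, giving $|\mu|=\sum_i i\cdot a_i$ with the $a_i$ free nonnegative integers and $\mu_1-\mu_2=a_1$, to factor
\[
\sum_\mu x^{\lfloor(\mu_1-\mu_2)/M\rfloor}q^{|\mu|}
= \Bigl(\sum_{a_1\ge 0} x^{\lfloor a_1/M\rfloor} q^{a_1}\Bigr)\prod_{i=2}^k\frac{1}{1-q^i}
= \frac{1+q+\cdots+q^{M-1}}{1-xq^M}\cdot\prod_{i=2}^k\frac{1}{1-q^i},
\]
and multiply by $q^{k^2}$ to match the $k$th term on the right-hand side of Theorem~\ref{RR1x}.

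Combining both interpretations with Theorem~\ref{RR1x} shows that the coefficients of $x^k q^n$ agree, which is precisely the assertion of Theorem~\ref{RRxcomb1}. The main obstacle is the slightly asymmetric one-part generating function, whose shape reflects that the constraint $\lambda_1\ge 1$ excludes the empty partition and breaks the otherwise clean geometric-series factorization; all other steps are standard Rogers--Ramanujan bookkeeping.
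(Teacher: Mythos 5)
Your proposal is correct and follows essentially the same route as the paper: both deduce the combinatorial statement from Theorem~\ref{RR1x} by interpreting the replacement $1/(1-q)\rightarrow [M]_q/(1-xq^M)=\sum_{p\ge 0}q^p x^{\lfloor p/M\rfloor}$ as weighting the quantity $\lambda_1-\lambda_2-2$, which is what the factor $1/(1-q)$ tracks after the double staircase $(2k-1,2k-3,\dots,1)$ is removed. The only difference is one of detail: the paper's proof is a two-line sketch invoking the classical staircase/column-reading argument, whereas you spell out the one-part case and the successive-difference factorization explicitly.
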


The purpose of this paper is to give the analogous results for several other classical 
partition theorems: the G\"ollnitz-Gordon identities, Euler's odd=distinct theorem, and 
the Andrews-Gordon identities. The main engine, Proposition~\ref{mainprop}, may be 
applied to many other single sum identities. The results obtained here by marking a part are 
refinements of the corresponding classical results.

We shall also consider ``shifting" a part, for example replacing 
all $9$'s by $22$'s in \eqref{RR1}. This is replacing the factor
$$
\frac{1}{1-q^9} \qquad {\text{ by }} \frac{1}{1-q^{22}}.
$$
We shall see that the set of partitions enumerated by the sum side is an explicit subset
of the partitions in the original identity.

Finally in section 6 we consider marking a sum of parts. 
We can extend Theorem~\ref{RRxcomb1} to allow other values of $M$, for example 
$M=7$, by marking the partition $6+1$. See Corollary~\ref{extendRRcomb}.

We use the standard notation,
$$
(A;q)_k=\prod_{j=0}^{k-1}(1-Aq^j), \qquad [M]_q=\frac{1-q^M}{1-q}.
$$
If the base $q$ is understood we may write $(A;q)_k$ as $(A)_k.$

\section{The Rogers-Ramanujan identities}

In this section we give prototypical examples for the Rogers-Ramanujan identities.

First we state  a marked version of the second Rogers-Ramanujan identity, which follows
from Proposition~\ref{mainprop}.

\begin{theorem} 
\label{RR2x}
Let $M\ge 2$ be any integer congruent to $2$ or $3\mod 5$. Then
$$
\begin{aligned}
&\frac{1-q^M}{1-xq^M}\frac{1}{(q^2;q^5)_\infty (q^3;q^5)_\infty}=
1+q^2\frac{[M-2]_q+xq^{M-2}+q^{M-1}}{1-xq^M}+\\
&\sum_{k=2}^{\infty} q^{k^2+k} \frac{[M]_q}{1-xq^M}\frac{1}{(q^2;q)_{k-1}}.
\end{aligned}
$$
\end{theorem}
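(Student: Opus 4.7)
The plan is to derive the identity directly from the classical second Rogers-Ramanujan identity
$$\frac{1}{(q^2;q^5)_\infty (q^3;q^5)_\infty} = 1 + \sum_{k=1}^{\infty} \frac{q^{k^2+k}}{(q;q)_k}$$
by multiplying both sides by $(1-q^M)/(1-xq^M)$ and then reorganizing the right-hand side to match the stated form. This parallels the derivation of Theorem~\ref{RR1x} and, as the authors indicate, is the prototypical application of Proposition~\ref{mainprop}.

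The first step is routine. For each $k \geq 1$, factor $(q;q)_k = (1-q)(q^2;q)_{k-1}$ and use $(1-q^M)/(1-q) = [M]_q$ to rewrite
$$\frac{1-q^M}{1-xq^M} \cdot \frac{q^{k^2+k}}{(q;q)_k} = \frac{q^{k^2+k} [M]_q}{(1-xq^M)(q^2;q)_{k-1}},$$
which already matches the desired summand for each $k \geq 2$. What remains is to show that the combined $k=0$ plus $k=1$ contribution, namely $\frac{1-q^M}{1-xq^M} + q^2 \cdot \frac{[M]_q}{1-xq^M}$, coincides with the displayed $1 + q^2 \cdot \frac{[M-2]_q + xq^{M-2} + q^{M-1}}{1-xq^M}$.

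The essential algebraic input for this last step is the rational identity
$$\frac{q^M(1-x)}{1-xq^M} = 1 - \frac{1-q^M}{1-xq^M},$$
together with the trivial telescope $[M]_q - [M-2]_q - q^{M-1} = q^{M-2}$. A short calculation using these two observations shows the difference between the two candidate expressions vanishes; I would not grind through it here.

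The main (though modest) obstacle is selecting the correct way to split off the constant $1$ from the $k=1$ term so that the marking variable $x$ appears packaged as $xq^{M-2}$ in the numerator. Any other split would still yield a valid algebraic identity, but only this one exhibits the right-hand side manifestly as a series in $q$ and $x$ with nonnegative coefficients, a property that will be needed for a combinatorial interpretation analogous to Theorem~\ref{RRxcomb1}. The hypothesis $M \equiv 2, 3 \pmod 5$ with $M \geq 2$ plays no role in the algebra itself but ensures that the factor $1/(1-q^M)$ genuinely appears in $(q^2;q^5)_\infty (q^3;q^5)_\infty$, so that the operation of marking is meaningful.
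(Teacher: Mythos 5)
Your proposal is correct and follows essentially the same route as the paper: the paper obtains the identity by specializing Proposition~\ref{mainprop} to $\alpha_j=q^{j^2+j}$ and simplifying the $j=1$ term, and your direct computation (factoring $(q;q)_k=(1-q)(q^2;q)_{k-1}$, absorbing $(1-q^M)/(1-q)=[M]_q$, then repackaging the $k=0,1$ contributions) is exactly the calculation underlying that specialization. Your closing remarks about the role of the congruence hypothesis and the choice of split ensuring nonnegative coefficients also match the paper's discussion.
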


Here is a combinatorial version of Theorem~\ref{RR2x}.

\begin{theorem} 
\label{RRx2comb1}
Let $M$ be positive integer which is congruent to $2$  or $3\mod 5$. Then the
number of partitions of $n$ into parts congruent to $2$ or $3\mod 5$ with exactly $k$ $M$'s
is equal to the number of partitions $\lambda$ of $n$ with difference at 
least $2$, no $1$'s, and
\begin{enumerate}
\item if $\lambda$ has one part, then $n=Mk+j$, $2\le j\le M-1$, or $j=0$ or $j=M+1,$
\item if $\lambda$ has at least two parts, then $\lfloor(\lambda_1-\lambda_2-2)/M\rfloor=k.$
\end{enumerate}
\end{theorem}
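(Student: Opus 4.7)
The plan is to deduce Theorem~\ref{RRx2comb1} from the analytic identity in Theorem~\ref{RR2x} by interpreting each side as a generating function in $x$ and $q$ and comparing the coefficient of $x^k q^n$.

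For the left side, since $M\equiv 2$ or $3\pmod 5$ and $M\ge 2$, the factor $1/(1-q^M)$ already appears in $(q^2;q^5)_\infty^{-1}(q^3;q^5)_\infty^{-1}$. Cancelling it against $1-q^M$ and expanding $1/(1-xq^M)=\sum_{k\ge 0}x^kq^{kM}$ shows that the left side is the generating function for partitions into parts $\equiv 2,3\pmod 5$ in which $x$ records the number of parts equal to $M$, so $[x^kq^n]$ of the left side is exactly the quantity we wish to count.

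For the right side, I decode each summand. The constant term $1$ accounts for the empty partition. For each $k\ge 2$, I use the standard gap parametrization of partitions $\lambda$ with $k$ parts, $\lambda_k\ge 2$, and consecutive differences $\ge 2$: set $e_1=\lambda_1-\lambda_2-2$, $e_i=\lambda_i-\lambda_{i+1}-2$ for $2\le i\le k-1$, and $e_k=\lambda_k-2$. Then all $e_i\ge 0$ and a direct calculation gives $|\lambda|=k(k+1)+\sum_{j=1}^{k} j\,e_j$. Summing freely over $e_2,\ldots,e_k$ produces the factor $1/(q^2;q)_{k-1}$, and weighting $e_1$ by $x^{\lfloor e_1/M\rfloor}q^{e_1}$ yields
\begin{equation*}
\sum_{e_1\ge 0} x^{\lfloor e_1/M\rfloor}q^{e_1}=\sum_{j\ge 0}\sum_{r=0}^{M-1}x^jq^{jM+r}=\frac{[M]_q}{1-xq^M}.
\end{equation*}
Multiplying by $q^{k(k+1)}$ recovers the $k$th summand of Theorem~\ref{RR2x}, and the $x^k$-coefficient enumerates precisely the partitions of clause (2) having $\lfloor(\lambda_1-\lambda_2-2)/M\rfloor=k$.

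It remains to interpret the $k=1$ summand as the generating function for single-part partitions. Expanding
\begin{equation*}
q^2\,\frac{[M-2]_q+xq^{M-2}+q^{M-1}}{1-xq^M}=\sum_{j\ge 0}x^j\bigl(q^{jM+2}[M-2]_q+q^{jM+M+1}\bigr)+\sum_{j\ge 1}x^jq^{jM},
\end{equation*}
one reads off that $[x^k]$ generates single parts $n\in\{Mk\}\cup\{Mk+2,\ldots,Mk+M-1\}\cup\{Mk+M+1\}$ when $k\ge 1$, and $n\in\{2,\ldots,M-1,M+1\}$ when $k=0$, which is precisely clause (1). The main obstacle lies in this $k=1$ verification: the asymmetric numerator $[M-2]_q+xq^{M-2}+q^{M-1}$ is engineered so that after division by $1-xq^M$ the residues $1$ and $M$ modulo $M$ are omitted while $M+1$ is retained, reproducing the set in (1). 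Once this is checked, matching $x^kq^n$-coefficients on both sides of Theorem~\ref{RR2x} establishes Theorem~\ref{RRx2comb1}.
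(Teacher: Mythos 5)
Your proof is correct and follows essentially the same route as the paper: both deduce the result from the analytic identity of Theorem~\ref{RR2x} by observing that the factor $1/(1-q)$ in the classical sum side records $\lambda_1-\lambda_2-2$ and that its replacement by $[M]_q/(1-xq^M)=\sum_p q^p x^{\lfloor p/M\rfloor}$ therefore weights by $x^{\lfloor(\lambda_1-\lambda_2-2)/M\rfloor}$. You simply make explicit the gap parametrization and the one-part ($k=1$) verification, both of which the paper leaves implicit, and those computations check out.
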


\begin{proof} We simultaneously prove Theorems~\ref{RRxcomb1} and \ref{RRx2comb1}.
We need to understand the combinatorics of the replacement in the $k^{th}$ term 
on the sum side
\begin{equation}
\label{ones}
\frac{1}{1-q}\rightarrow \frac{[M]_q}{1-xq^M}=\sum_{p=0}^\infty q^p x^{\lfloor p/M\rfloor}.
\end{equation}
In the classical Rogers-Ramanujan identities, the factor $1/(1-q)$ represents 
the difference in the first two parts after the double staircase has been removed. 
This is the second case of each theorem.
\end{proof}

\begin{example} Let $k=2$, $M=7$, and $n=22.$ The 
equinumerous sets of partitions for Theorem~\ref{RRx2comb1} are
$$
\{(8,7,7), (7,7,3,3,2), (7,7,2,2,2,2) \}
\leftrightarrow \{ (22), (20,2), (19,3)\}.
$$
\end{example}

Equivalent combinatorial versions of Theorems~\ref{RRxcomb1} and \ref{RRx2comb1} 
may be given (see \cite[Theorem 2, Theorem 3]{O:S}). This time the terms $k\ge M$ of 
the sum side are considered, and the 
replacement considered is
$$
\frac{1}{1-q^M}\rightarrow \frac{1}{1-xq^M},
$$
namely the part $M$ is marked on the sum side.
We need notation for when a double staircase is 
removed from a partition with difference at least two.

\begin{defn}
For any partition 
$\lambda$ with $k$ parts whose difference of parts is at least 2, let $\lambda^*$ 
denote the partition obtained upon removing the double staircase $(2k-1,2k-3,\cdots ,1)$ from 
$\lambda$, and reading the result by columns.

For any partition
$\lambda$ with $k$ parts and no 1's whose difference of parts is at least 2, 
let $\lambda^{**}$ denote the partition obtained upon removing the double 
staircase $(2k,2k-2,\cdots ,2)$ from $\lambda$, and reading the result by columns.  
\end{defn}

\begin{theorem} 
\label{RRxcomb2}
Let $M$ be positive integer which is congruent to $1$  or $4\mod 5$. Then the
number of partitions of $n$ into parts congruent to $1$ or $4\mod 5$ with exactly $k$ $M$'s
is equal to the number of partitions $\lambda$ of $n$ with difference at 
least $2$ and
\begin{enumerate}
\item if $\lambda$ has one part, then $\lfloor n/M\rfloor=k,$
\item if $\lambda$ has between two and $M-1$ parts, then $\lfloor(\lambda_1-\lambda_2-2)/M\rfloor=k$,
\item if $\lambda$ has at least $M$ parts, then $\lambda^*$ has exactly $k$ $M$'s.
\end{enumerate}
\end{theorem}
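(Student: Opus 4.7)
The plan is to refine Theorem~\ref{RR1x} by reading its sum side term-by-term, with the $\ell$-th term contributing partitions $\lambda$ having exactly $\ell$ parts. The $\ell=1$ contribution
$$
q\cdot\frac{1+q+\cdots+q^{M-2}+xq^{M-1}}{1-xq^M}
$$
has coefficient of $x^kq^n$ equal to the number of single-part partitions $\lambda=(n)$ with $\lfloor n/M\rfloor=k$, yielding case~(1). For $2\le\ell\le M-1$, the identity $[M]_q/(1-xq^M)=\sum_{p\ge 0}q^p x^{\lfloor p/M\rfloor}$, combined with the standard Rogers--Ramanujan bijection in which $p$ records the excess $\lambda_1-\lambda_2-2$, reproduces the argument used for Theorem~\ref{RRxcomb1} and gives case~(2).

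The new ingredient enters for $\ell\ge M$, where the cancellation
$$
\frac{[M]_q}{(1-q^2)(1-q^3)\cdots(1-q^\ell)}
=\frac{1-q^M}{(1-q)(1-q^2)\cdots(1-q^\ell)}
=\prod_{\substack{1\le j\le\ell\\ j\ne M}}\frac{1}{1-q^j}
$$
allows us to rewrite the $\ell$-th term of Theorem~\ref{RR1x} as
$$
q^{\ell^2}\cdot\frac{1}{1-xq^M}\cdot\prod_{\substack{1\le j\le\ell\\ j\ne M}}\frac{1}{1-q^j}=q^{\ell^2}\sum_\mu q^{|\mu|}x^{\mu_M},
$$
where $\mu$ ranges over partitions with parts in $\{1,2,\dots,\ell\}$ and $\mu_M$ denotes the multiplicity of $M$ in $\mu$. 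Under the Rogers--Ramanujan bijection that removes the double staircase $(2\ell-1,2\ell-3,\dots,1)$ from $\lambda$ and reads the columns of the remainder, $\mu$ is exactly $\lambda^*$, so the coefficient of $x^kq^n$ in this term counts the $\lambda$ described in~(3).

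Summing the three cases gives the full sum side of Theorem~\ref{RR1x}, so matching coefficients of $x^kq^n$ against the product side proves the desired equinumerosity. The main obstacle is spotting the regrouping: the factor $[M]_q$ cancels the $(1-q^M)^{-1}$ in the Pochhammer denominator precisely when $\ell\ge M$, and it is this cancellation that switches the marked statistic from the residue of $\lambda_1-\lambda_2-2\pmod M$ to the multiplicity of $M$ in $\lambda^*$, explaining the dichotomy at $\ell=M$ in the statement.
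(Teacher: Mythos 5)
Your proof is correct and follows the same route as the paper: for terms with at least $M$ parts you cancel $[M]_q$ against the factor $(1-q^M)$ hiding in $(q^2;q)_{\ell-1}$ so that $1/(1-xq^M)$ marks the multiplicity of $M$ in $\lambda^*$, while for fewer than $M$ parts you keep the $\sum_p q^p x^{\lfloor p/M\rfloor}$ reading of $[M]_q/(1-xq^M)$ as in Theorem~\ref{RRxcomb1}. This is exactly the ``second interpretation'' the paper gives after Proposition~\ref{mainprop}, so no further comment is needed.
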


\begin{example} Let $k=2$, $M=4$, and $n=24.$ The 
equinumerous sets of partitions for Theorem~\ref{RRxcomb2} are
$$
\begin{aligned}
&\{(16,4^2), (14,4^2,1^2), (11,4^2,1^5), (9,6,4^2,1), (9,4^2,1^7), (6,6,4^2,1^4),\\
&(6,4^2,1^{10}),(4^2,1^{16})\}\leftrightarrow\\
& \{(9,7,5,3), (18,5,1), (17,6,1), (17,5,2), (16,6,2), (16,5,3), (17,7), (18,6)\}.
\end{aligned}
$$
\end{example}

\begin{theorem} 
\label{RRx2comb2}
Let $M$ be positive integer which is congruent to $2$  or $3\mod 5$. Then the
number of partitions of $n$ into parts congruent to $2$ or $3\mod 5$ with exactly $k$ $M$'s
is equal to the number of partitions $\lambda$ of $n$ with difference at 
least $2$, no $1$'s and
\begin{enumerate}
\item if $\lambda$ has one part, then $n=Mk+j$, $2\le j\le M-1$, or $j=0$ or $j=M+1,$
\item if $\lambda$ has between two and $M-1$ parts, then $\lfloor(\lambda_1-\lambda_2-2)/M\rfloor=k$,
\item if $\lambda$ has at least $M$ parts, then $\lambda^{**}$ has exactly $k$ $M$'s.
\end{enumerate}
\end{theorem}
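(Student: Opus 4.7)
The plan is to expand each term on the sum side of Theorem~\ref{RR2x} and interpret it combinatorially, splitting the partitions $\lambda$ by number of parts. I would first dispose of the constant $1$ (matching the empty partition) and the $k=1$ term. Expanding
$$q^2 \frac{[M-2]_q + xq^{M-2} + q^{M-1}}{1-xq^M}$$
by multiplying the numerator against the geometric series $\sum_{j\ge 0} x^j q^{Mj}$ yields precisely the single parts $n=Mk+j$ with $j\in\{0,2,3,\dots,M-1,M+1\}$ (requiring $k\ge 1$ when $j=0$), giving case~(1).

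For the $k$-th term with $k\ge 2$, the key algebraic reformulation is
$$q^{k^2+k}\frac{[M]_q}{1-xq^M}\frac{1}{(q^2;q)_{k-1}} \;=\; q^{k^2+k}\,\frac{1-q^M}{(1-xq^M)(q;q)_k},$$
using $[M]_q=(1-q^M)/(1-q)$ and $(q;q)_k=(1-q)(q^2;q)_{k-1}$. From here the combinatorial argument splits according to whether $(q;q)_k$ actually contains the factor $(1-q^M)$. For $2\le k\le M-1$ it does not, and I would proceed exactly as in the proof of Theorem~\ref{RRx2comb1}: expand $[M]_q/(1-xq^M)=\sum_p q^p x^{\lfloor p/M\rfloor}$ and apply the classical double-staircase bijection $\lambda=(2k,2k-2,\dots,2)+\mu$ with $\mu$ parametrised by successive differences $a_i=\mu_i-\mu_{i+1}$. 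The $\frac{1}{1-q}$-slot now carries $a_1=\lambda_1-\lambda_2-2$, so the $x$-weight becomes $x^{\lfloor(\lambda_1-\lambda_2-2)/M\rfloor}$, which is case~(2).

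For $k\ge M$ the factor $(1-q^M)$ cancels inside $(q;q)_k$, and the term reduces to the generating function
$$q^{k^2+k}\sum_{\nu:\,\text{parts}\le k} q^{|\nu|}\, x^{a_M(\nu)},$$
where $a_M(\nu)$ counts the parts of $\nu$ equal to $M$. Conjugating $\nu\leftrightarrow\mu=\nu'$ turns this into $q^{k^2+k}\sum_\mu q^{|\mu|}x^{\mu_M-\mu_{M+1}}$ over partitions $\mu$ with at most $k$ parts, since the number of $M$'s in any partition equals the difference of the $M$-th and $(M{+}1)$-st parts of its conjugate. Combined with the same double-staircase bijection, $\mu=\lambda-(2k,2k-2,\dots,2)$ and by definition $\lambda^{**}=\mu'$, so that $\mu_M-\mu_{M+1}$ is exactly the number of $M$'s in $\lambda^{**}$, delivering case~(3).

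The step I expect to be fussiest is this last bookkeeping: keeping the ``read by columns'' convention used in the definition of $\lambda^{**}$ aligned with conjugation at every stage, and checking that the cancellation $\tfrac{1-q^M}{(q;q)_k}=\prod_{i\ne M,\,1\le i\le k}(1-q^i)^{-1}$ is compatible with the marking of $M$-parts in $\nu$. Once that is pinned down, the three cases partition the $\lambda$-side according to the number of parts ($1$; $2$ through $M-1$; at least $M$) and together exhaust the coefficient of $x^k$ on the product side.
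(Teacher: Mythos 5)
Your proposal is correct and follows essentially the same route the paper takes (via Proposition~\ref{mainprop} and the two interpretations of $[M]_q/((1-xq^M)(q^2;q)_{k-1})$ discussed in Section~3): the $1/(1-q)$ slot tracks $\lambda_1-\lambda_2-2$ for $2\le k\le M-1$, and for $k\ge M$ the factor $(1-q^M)$ cancels so that the $M$'s of $\lambda^{**}$ are marked directly. The bookkeeping you flag as fussy (conjugation versus ``read by columns,'' and $\mu_M-\mu_{M+1}$ counting the $M$'s of $\mu'$) checks out.
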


\section{A general expansion}

In this section we give a general expansion, Proposition~\ref{mainprop}, 
for marking a single part. 

Many partition identities have a sum side of the form
$$
\sum_{j=0}^\infty \frac{\alpha_j}{(q;q)_j},
$$
where $\alpha_j$ has non-negative coefficients as a power series in $q$.

These include \begin{enumerate}
\item the Rogers-Ramanujan identities, $\alpha_j=q^{j^2} {\text{ or }} q^{j^2+j},$
\item Euler's odd=distinct theorem, $\alpha_j=q^{\binom{j+1}{2}},$
\item the G\"ollnitz-Gordon identities, $q$ replaced by $q^2$, $\alpha_j=q^{j^2}(-q;q^2)_j,$
\item all partitions by largest part, $\alpha_j=q^j,$
\item all partitions by Durfee square, $\alpha_j=q^{j^2}/(q;q)_j.$
\end{enumerate}

A part of size $M$ may be marked in general using the next proposition. 

\begin{prop} 
\label{mainprop}
For any positive integer $M$, if $\alpha_0=1$,
$$
\frac{1-q^M}{1-wq^M}\sum_{j=0}^\infty \frac{\alpha_j}{(q;q)_j}=
1+\frac{\alpha_1[M]_q-q^M+wq^M}{1-wq^M}+\sum_{j=2}^\infty 
\frac{[M]_q}{1-wq^M}\frac{\alpha_j}{(q^2;q)_{j-1}}.
$$
\end{prop}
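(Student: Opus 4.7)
The plan is essentially algebraic: split the sum on the left-hand side according to the size of $j$ and apply the rewriting $(q;q)_j = (1-q)(q^2;q)_{j-1}$ for $j\ge 1$, which converts a single factor $1/(1-q)$ into $[M]_q/(1-wq^M)$ once combined with the prefactor $(1-q^M)/(1-wq^M)$.

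First I would separate the sum into three pieces, $j=0$, $j=1$, and $j\ge 2$. Since $\alpha_0=1$, the $j=0$ contribution is simply $(1-q^M)/(1-wq^M)$. The $j=1$ piece has $(q;q)_1 = 1-q$, so it equals
$$
\frac{1-q^M}{1-wq^M}\cdot\frac{\alpha_1}{1-q}=\frac{\alpha_1[M]_q}{1-wq^M}.
$$
For each $j\ge 2$, pulling out one factor of $1-q$ from $(q;q)_j$ gives
$$
\frac{1-q^M}{1-wq^M}\cdot\frac{\alpha_j}{(1-q)(q^2;q)_{j-1}}=\frac{[M]_q}{1-wq^M}\cdot\frac{\alpha_j}{(q^2;q)_{j-1}},
$$
which matches the $j$-th summand on the right-hand side verbatim.

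The remaining step is to verify that the $j=0$ and $j=1$ pieces combine to the stated leading term. Clearing denominators,
$$
\frac{1-q^M}{1-wq^M}+\frac{\alpha_1[M]_q}{1-wq^M}=1+\frac{\alpha_1[M]_q-q^M+wq^M}{1-wq^M},
$$
which is a one-line check. There is no real obstacle here; the proposition is a bookkeeping identity, and the only judgment call is cosmetic. Writing the leading term as $1$ plus a correction with denominator $1-wq^M$ (rather than as a single fraction) is precisely the form that makes manifest the positivity of the coefficients in $q$ and $w$, which is what the subsequent combinatorial theorems in Sections 2, 4, and 5 need in order to be interpreted as generating functions of partitions with a marked part.
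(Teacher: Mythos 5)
Your proof is correct, and it is the intended argument: the paper states Proposition~\ref{mainprop} without proof precisely because it reduces to the bookkeeping you carried out (peeling off $j=0$ and $j=1$, using $(q;q)_j=(1-q)(q^2;q)_{j-1}$ and $(1-q^M)/(1-q)=[M]_q$, and recombining the constant term). The algebra checks out, including the final identity $\frac{1-q^M+\alpha_1[M]_q}{1-wq^M}=1+\frac{\alpha_1[M]_q-q^M+wq^M}{1-wq^M}$.
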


As long as $\alpha_1$ has the property that 
$$
\alpha_1 [M]_q-q^M
$$
is a positive power series in $q$, the right side has a combinatorial interpretation. 

There are two possible elementary 
combinatorial interpretations. For any $j\ge 2,$ the factor
$$
\frac{[M]_q}{1-wq^M}=\sum_{p=0}^\infty q^p w^{[p/M]}
$$
replaces  $1/(1-q),$ which accounts for parts of size $1$ in a partition. 
This is a weighted form of the number of 1's.

The second interpretation holds for terms with $j\ge M.$ Here
$$
\frac{[M]_q}{1-wq^M}\frac{1}{(q^2;q)_{j-1}}=
\frac{1}{(1-q)\cdots (1-q^{M-1})(1-wq^M)(1-q^{M+1})\cdots (1-q^j)}.
$$
In this case the part of size $M$ is marked by $w$.

For a particular combinatorial application of Proposition~\ref{mainprop} one must 
realize what the denominator factors
$(1-q)$ and $(1-q^M)$ represent on the sum side. For example, in the Rogers-Ramanujan identities 
these factors account for $1$'s and $M$'s in $\lambda^*.$ Since 
$$
(\#1's {\text{ in }}\lambda^*)=\lambda_1-\lambda_2-2,
$$
the two 
interpretations are Theorem~\ref{RRxcomb1} and Theorem~\ref{RRxcomb2}.

\subsection{Distinct parts} 

Choosing $\alpha_j=q^{\binom{j+1}{2}}$ in Proposition~\ref{mainprop} gives 
distinct partitions, which by 
Euler's theorem are equinumerous with partitions into odd parts. Here is the marked version.

\begin{cor} 
\label{RR1cor}
For any odd positive integer $M$,
$$
\begin{aligned}
&\frac{1}{(1-q)(1-q^3)\cdots(1-q^{M-2})(1-wq^M)(1-q^{M+2})\cdots}\\
&=
1+\frac{q+q^2+\cdots+q^{M-1}+wq^M}{1-wq^M}+\sum_{j=2}^\infty 
\frac{q^{\binom{j+1}{2}}}{(q^2;q)_{j-1}}\frac{[M]_q}{1-wq^M}.
\end{aligned}
$$
\end{cor}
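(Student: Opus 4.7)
The plan is to derive Corollary~\ref{RR1cor} as a direct specialization of Proposition~\ref{mainprop} with the choice $\alpha_j = q^{\binom{j+1}{2}}$. First I would recall the unmarked identity
$$
\sum_{j=0}^\infty \frac{q^{\binom{j+1}{2}}}{(q;q)_j} = \frac{1}{(q;q^2)_\infty},
$$
which is one form of Euler's distinct-equals-odd theorem: the $j$-th summand generates partitions into exactly $j$ distinct positive parts by peeling off the minimal staircase $1+2+\cdots+j = \binom{j+1}{2}$ and letting $1/(q;q)_j$ account for a weakly increasing tail. Since $M$ is odd, $(1-q^M)$ is a genuine factor of $(q;q^2)_\infty$, so multiplying both sides by $(1-q^M)/(1-wq^M)$ replaces the single factor $1/(1-q^M)$ by $1/(1-wq^M)$ and produces the left-hand side of the corollary.

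Next I would specialize the right-hand side of Proposition~\ref{mainprop}. The constant term $1$ is unchanged. With $\alpha_1 = q$, the ``$j=1$'' term becomes
$$
\frac{\alpha_1[M]_q - q^M + wq^M}{1-wq^M} = \frac{q + q^2 + \cdots + q^{M-1} + wq^M}{1-wq^M},
$$
which matches the second summand in the corollary. The tail terms with $j \ge 2$ read off verbatim from Proposition~\ref{mainprop} with $\alpha_j = q^{\binom{j+1}{2}}$.

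The main ``obstacle'' is really just bookkeeping; all the substance sits in Proposition~\ref{mainprop} together with the Euler identity above. The one subtle point worth flagging is the role of the oddness hypothesis on $M$: without it, $1-q^M$ would not appear in the denominator $(q;q^2)_\infty$, so the transformation $1/(1-q^M) \to 1/(1-wq^M)$ would not correspond to marking a legitimate part on the product side, and the corollary would be vacuous (or false).
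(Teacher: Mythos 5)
Your proposal is correct and is exactly the paper's route: the corollary is obtained by setting $\alpha_j=q^{\binom{j+1}{2}}$ in Proposition~\ref{mainprop}, invoking Euler's identity $\sum_j q^{\binom{j+1}{2}}/(q;q)_j=1/(q;q^2)_\infty$, and simplifying the $j=1$ term to $(q+\cdots+q^{M-1}+wq^M)/(1-wq^M)$. Your observation about why oddness of $M$ is needed for the product-side interpretation is also the right point to flag.
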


\begin{defn}
For any partition $\lambda$ with $j$ distinct parts let $\lambda^{St}$ be the 
partition obtained upon 
removing a staircase $(j,j-1,\cdots ,1)$ from $\lambda$, and reading the result by columns.
\end{defn}

\begin{example} If $\lambda=(8,7,3,1)$, then $\lambda^{St}=(3,2,2,2).$
\end{example}

Here is the combinatorial version of Corollary~\ref{RR1cor}, generalizing Euler's theorem. 

\begin{theorem} 
\label{combthm2}
For any odd positive integer $M$, the number 
of partitions of $n$ into odd parts with exactly $k$ parts of size $M$, is equal to 
the number of partitions $\lambda$ of $n$ into distinct parts such that 
\begin{enumerate}
\item if $\lambda$ has one part, then $\lfloor n/M\rfloor=k,$
\item if $\lambda$ has at least two parts, then $\lfloor (\lambda_1-\lambda_2-1)/M\rfloor=k.$
\end{enumerate}
\end{theorem}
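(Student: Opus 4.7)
The plan is to compare coefficients of $w^k q^n$ on the two sides of Corollary~\ref{RR1cor}. By Euler's theorem and the marking convention, the left-hand side of that corollary is the generating function for partitions of $n$ into odd parts with each part of size $M$ weighted by $w$, so its coefficient of $w^kq^n$ is exactly the left-hand count in the theorem. It therefore suffices to show that each term on the right-hand side contributes the corresponding piece of the distinct-part count.

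The leading $1$ handles the empty partition (the case $k=n=0$). For the middle summand
$$
\frac{q+q^2+\cdots+q^{M-1}+wq^M}{1-wq^M},
$$
I would expand as a geometric series in $wq^M$ and read off that the coefficient of $w^kq^n$ is $1$ exactly when $n\ge 1$ and $k=\lfloor n/M\rfloor$; since the unique one-part partition of $n$ is $(n)$, this accounts for case~(1).

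For $j\ge 2$ I would rewrite the summand as
$$
\frac{q^{\binom{j+1}{2}}}{(q;q)_j}\cdot\frac{1-q^M}{1-wq^M}
$$
and use the classical interpretation of $q^{\binom{j+1}{2}}/(q;q)_j$ as the generating function for partitions with exactly $j$ distinct parts: write $\lambda=\mu+(j,j-1,\ldots,1)$ where $\mu$ has at most $j$ parts, so that by definition $\lambda^{St}=\mu^T$ is enumerated by $1/(q;q)_j$, and the factor $1/(1-q)$ inside tracks the number of $1$'s in $\lambda^{St}$. The crux of the argument, which I expect to be the most delicate step, is to identify the statistic introduced by the replacement $\frac{1}{1-q}\to\frac{[M]_q}{1-wq^M}=\sum_{p\ge 0}q^pw^{\lfloor p/M\rfloor}$: it weights $\lambda^{St}$ by $w^{\lfloor p/M\rfloor}$, where $p$ is the number of $1$'s in $\lambda^{St}$. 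By conjugation this number equals $\mu_1-\mu_2=(\lambda_1-j)-(\lambda_2-(j-1))=\lambda_1-\lambda_2-1$, which is precisely the statistic in case~(2); summing over $j\ge 2$ completes the match.
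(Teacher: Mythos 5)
Your argument is correct and is essentially the derivation the paper intends: Theorem~\ref{combthm2} is presented as the combinatorial reading of Corollary~\ref{RR1cor}, obtained exactly as you do by noting that the factor $1/(1-q)$ in $q^{\binom{j+1}{2}}/(q;q)_j$ tracks the number of $1$'s in $\lambda^{St}$, which equals $\mu_1-\mu_2=\lambda_1-\lambda_2-1$, so the replacement $1/(1-q)\to\sum_p q^p w^{\lfloor p/M\rfloor}$ installs the statistic $\lfloor(\lambda_1-\lambda_2-1)/M\rfloor$, while the middle term accounts for the one-part case. The paper additionally supplies an independent bijective proof via an $M$-version of Sylvester's fishhook map, but that is a separate proposition and not needed for your generating-function argument.
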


\begin{example} Let $k=2$, $M=5$, and $n=18.$ The 
equinumerous sets of partitions for Theorem~\ref{combthm2} are
$$
\begin{aligned}
&\{(7,5,5,1),(5,5,3,3,1,1),(5,5,3,1^5), (5,5,1^8)\}\\
&\leftrightarrow
\{(16,2),(15,3), (15,2,1), (14,3,1)\}.
\end{aligned}
$$

\end{example}

\begin{prop} There is an $M$-version of the Sylvester ``fishhook"  bijection which proves 
Theorem~\ref{combthm2}. 
\end{prop}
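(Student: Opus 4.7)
The plan is to modify Sylvester's classical fishhook bijection (which proves Euler's theorem, the case $M=1$ of Theorem~\ref{combthm2}) so that the number of parts of size $M$ in the odd partition $\mu$ is recorded by the gap statistic $\lfloor(\lambda_1-\lambda_2-1)/M\rfloor$ in the distinct partition $\lambda$. Recall that the classical bijection represents $\mu$ by its centered 2-modular diagram --- each odd part $2a+1$ becomes a row of length $2a+1$ symmetric about a central column --- and then reads successive fishhooks (L-shaped paths sweeping down a column and across the bottom row), whose lengths form the parts of $\lambda$. A standard feature of this bijection is that the number of ones in $\mu$ equals $\lambda_1 - \lambda_2 - 1$ whenever $\lambda$ has at least two parts, which is precisely the $M=1$ case of Theorem~\ref{combthm2}.

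For the $M$-version I would proceed in two stages. Given $\mu$ with exactly $k$ parts equal to $M$, first strip off these $k$ rows (each of length $M$) from the centered diagram and apply the classical fishhook to the remaining odd partition, obtaining a distinct partition $\tilde\lambda$ of $n-kM$. Then define $\lambda$ by adding $kM$ to the largest part of $\tilde\lambda$. Since $\tilde\lambda_1 > \tilde\lambda_2$ and $kM \geq 0$, the result is still distinct, and by construction $\lambda_1 - \lambda_2 - 1 = (\tilde\lambda_1 - \tilde\lambda_2 - 1) + kM$. The single-part boundary case (when the stripped diagram maps to a one-part $\tilde\lambda$) must be separated and checked against case (1) of Theorem~\ref{combthm2}.

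The main obstacle is that this two-step procedure only yields $\lfloor(\lambda_1-\lambda_2-1)/M\rfloor = k$ when the residual gap $\tilde\lambda_1 - \tilde\lambda_2 - 1$ lies in $\{0,1,\ldots,M-1\}$, but the classical fishhook applied to the stripped partition produces a gap equal to its number of ones, which can be any nonnegative integer. The remedy is a preliminary ``$M$-carry'' step on the stripped 2-modular diagram that converts blocks of $M$ ones into compensating structural moves that do not introduce a new row of length $M$ (and that correspondingly adjust the number of stripped rows to keep track of $k$). Showing that this carry composes cleanly with the classical fishhook and produces a well-defined bijection is the central technical point; invertibility follows by reversing the procedure --- given $\lambda$ with $\lfloor(\lambda_1-\lambda_2-1)/M\rfloor = k$, subtract $kM$ from $\lambda_1$, apply the inverse fishhook, and undo the $M$-carry. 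The generating function identity of Corollary~\ref{RR1cor} provides an independent check that the bijection has the correct domain and range cardinalities at every $(n,k)$.
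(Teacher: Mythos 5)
Your overall strategy is the right one --- reduce to the classical fishhook, record $k$ by adding/subtracting $kM$ on the largest part, and observe that the construction only works when the residual gap lies in $\{0,1,\dots,M-1\}$ --- and you have correctly located the obstruction. But the proof is not complete: the ``$M$-carry'' step, which you yourself call the central technical point, is never actually defined, only asserted to exist. Moreover the sketch you give of it points in the wrong direction. The paper's construction (stated distinct $\to$ odd: subtract $kM$ from $\lambda_1$ to get $\theta$ with $\theta_1-\theta_2-1\le M-1$, set $\gamma=FH(\theta)$, then adjoin $M^k$) runs into exactly your difficulty in reverse: $\gamma$ may accidentally contain $r\ge 1$ parts of size $M$, which would spoil the count. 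The fix is \emph{not} to avoid creating rows of length $M$ nor to fold the surplus into $k$; it is to convert all $r$ parts of size $M$ in $\gamma$ into $rM$ ones. Since $\gamma$ itself has at most $M-1$ ones, the two cases of the image are cleanly separated --- fewer than $M$ ones when no conversion happened, at least $M$ ones when it did --- and that dichotomy is precisely what makes the map invertible: on the odd side you recover $r$ as $\lfloor(\#1\text{'s})/M\rfloor$ after deleting the $k$ copies of $M$. In your direction this says: after stripping the $k$ rows of length $M$ from $\mu$, if the remainder has $s\ge M$ ones you must convert $\lfloor s/M\rfloor\cdot M$ of them into new parts of size $M$ \emph{before} applying the fishhook, leaving $k$ untouched. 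Without this explicit rule and the $<M$ versus $\ge M$ case split, well-definedness and invertibility are not established, and the generating-function identity of Corollary~\ref{RR1cor} only certifies cardinalities, not that your map is a bijection.
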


\begin{proof} Let $FH$ be the fishhook bijection from partitions with distinct parts to 
partitions with odd parts. If $FH(\lambda)=\mu,$ it is known that the number of 
$1$'s in $\mu$ is $\lambda_1-\lambda_2-1$, except for $FH(n)= 1^n.$ This proves  
Theorem~\ref{combthm2} if $M=1$, and $FH$ is the bijection for $M=1$.

For the $M$-version, $M>1$, let $\lambda$ have distinct parts. For $\lambda=n$ a single part, 
Define the $M$-version by $FH^M(n)= (M^k, 1^{n-kM})$ which has $k$ parts of size $M$.
Otherwise $\lambda$ has at least two parts, and 
$$
kM\le \lambda_1-\lambda_2-1\le (k+1)M-1.
$$
Let $\theta$ be the partition with distinct parts where $\lambda_1$ has 
been reduced by $kM,$
$$
0\le \theta_1-\theta_2-1\le M-1.
$$
Finally put $\gamma=FH(\theta),$ and note that $\gamma$ has at most $M-1$ $1$'s.

There are 2 cases. If $\gamma$ has no parts of size $M$, define 
$FH^M(\lambda)=\gamma\cup M^k$, so that $FH^M(\lambda)$ is a 
partition with odd parts, exactly $k$ parts of size $M$, and at most $M-1$ $1$'s.

If $\gamma$ has $r\ge 1$ parts of size $M,$ change all of them to $rM$ $1$'s to obtain
$\gamma'$ with at least $M$ $1$'s.  Then put $FH^M(\lambda)=\gamma'\cup M^k,$ 
so that $FH^M(\lambda)$ is a partition with odd parts, exactly $k$ parts of size $M$, 
and at least $M$ $1$'s.
\end{proof}

\begin{theorem} 
\label{combthm3}
For any odd positive integer $M$, the number 
of partitions of $n$ into odd parts with exactly $k$ parts of size $M$, is equal to 
the number of partitions $\lambda$ of $n$ into distinct parts such that 
\begin{enumerate}
\item if $\lambda$ has one part, then $\lfloor n/M \rfloor=k,$
\item if $\lambda$ has between two and $M-1$ parts, then $\lfloor(\lambda_1-\lambda_2-1)/M\rfloor=k,$ 
\item if $\lambda$ has at least $M$ parts, then $\lambda^{St}$ has exactly $k$ $M$'s.
\end{enumerate}
\end{theorem}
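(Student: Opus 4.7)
The plan is to mirror the derivation of Theorem~\ref{combthm2} from Corollary~\ref{RR1cor}, switching to the second combinatorial interpretation described after Proposition~\ref{mainprop} for summands with $j\ge M$. The left side of Corollary~\ref{RR1cor} generates partitions of $n$ into odd parts with $w$ tracking the multiplicity of $M$, so it suffices to match each term on the right side with exactly one of the three cases of the theorem.

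First I handle small $j$. The $j=0$ term is $1$, accounting for the empty partition, and the $j=1$ term $\frac{q+q^2+\cdots+q^{M-1}+wq^M}{1-wq^M}$ expands as $\sum_{n\ge 1}w^{\lfloor n/M\rfloor}q^n$, matching case~(1). For $2\le j\le M-1$, I reuse the argument from Theorem~\ref{combthm2}: the factor $q^{\binom{j+1}{2}}/(q;q)_j$ enumerates partitions $\lambda$ with exactly $j$ distinct parts via the staircase map $\lambda\mapsto\lambda^{St}$, the distinguished factor $1/(1-q)$ accounts for the $1$'s in $\lambda^{St}$, and these number exactly $\lambda_1-\lambda_2-1$. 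The substitution $1/(1-q)\mapsto[M]_q/(1-wq^M)=\sum_{p\ge 0}q^pw^{\lfloor p/M\rfloor}$ therefore weights each such $\lambda$ by $w^{\lfloor(\lambda_1-\lambda_2-1)/M\rfloor}$, yielding case~(2).

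The new content is case~(3), which applies when $\lambda$ has $j\ge M$ parts. Here I use the algebraic identity highlighted in the remark after Proposition~\ref{mainprop},
$$
\frac{[M]_q}{(1-wq^M)(q^2;q)_{j-1}}=\frac{1}{(1-q)(1-q^2)\cdots(1-q^{M-1})(1-wq^M)(1-q^{M+1})\cdots(1-q^j)},
$$
which I read as the generating function for partitions with all parts of size at most $j$, with $w$ marking each part of size exactly $M$. Multiplying by $q^{\binom{j+1}{2}}$ and invoking the staircase bijection identifies these partitions with $\lambda^{St}$ for distinct $\lambda$ of exactly $j$ parts, and the $w$-exponent becomes the number of $M$'s in $\lambda^{St}$. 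This is case~(3).

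The main thing to verify is consistency at the split $j=M$ and disjointness of the cases. When $\lambda$ has $j<M$ parts, $\lambda^{St}$ has largest part at most $j<M$, so it has no $M$'s and condition~(3) would force $k=0$, while condition~(2) can produce any $k\ge 0$; this confirms that conditions~(2) and~(3) genuinely correspond to different ranges of $j$. The remaining routine point is the bijective content of the staircase removal: subtracting $(j,j-1,\dots,1)$ from a distinct partition with $j$ parts produces a partition with at most $j$ parts, and conjugation (``reading by columns'') sends this to a partition with largest part at most $j$, precisely the object enumerated by $\prod_{i=1}^{j}(1-q^i)^{-1}$ modified at $i=M$. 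I expect no further obstacle.
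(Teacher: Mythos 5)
Your proof is correct and follows the same route the paper intends: Corollary~\ref{RR1cor} plus the two combinatorial readings of $[M]_q/(1-wq^M)$ described after Proposition~\ref{mainprop} — the ``number of $1$'s'' reading for $2\le j\le M-1$ (where no factor $1-q^M$ is available in $(q;q)_j$) and the ``marked part $M$'' reading for $j\ge M$ — together with the standard staircase/conjugation bijection. The paper leaves this argument implicit (it only proves the $M$-fishhook version of Theorem~\ref{combthm2} explicitly), and your write-up supplies exactly the missing bookkeeping, including the correct count $\lambda_1-\lambda_2-1$ of $1$'s in $\lambda^{St}$ and the disjointness of the three cases.
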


\begin{example} Let $k=2$, $M=3$, and $n=18.$ The 
equinumerous sets of partitions for Theorem~\ref{combthm3} are
$$
\begin{aligned}
&\{(11,3,3,1),(9,3,3,1^3), (7,5,3,3), (7,3,3,1^5),(5,5,3,3,1,1),(5,3,3,1^7), (3,3,1^{12})\}\\
&\leftrightarrow
\{(7,6,4,1),(8,5,4,1),(11,4,3),(10,5,3),(9,6,3),(8,7,3),(13,5)\}.
\end{aligned}
$$
\end{example}

%
\subsection{G\"ollnitz-Gordon identities}

The G\"ollnitz-Gordon identities are (see \cite{And1}, \cite{Gol}, \cite{Gor})
\begin{equation}
\label{GGeq1}
\sum_{n=0}^\infty q^{n^2}\frac{(-q;q^2)_n}{(q^2;q^2)_n}=
\frac{1}{(q;q^8)_\infty (q^4;q^8)_\infty (q^7;q^8)_\infty},
\end{equation}

\begin{equation}
\sum_{n=0}^\infty q^{n^2+2n}\frac{(-q;q^2)_n}{(q^2;q^2)_n}=
\frac{1}{(q^3;q^8)_\infty (q^4;q^8)_\infty (q^5;q^8)_\infty}
\end{equation}

We apply Proposition~\ref{mainprop} with $q$ replaced by $q^2,$ $M$ replaced 
by $M/2,$ and 
$\alpha_j=q^{j^2}(-q;q^2)_j$ to obtain the next result. 

\begin{cor} 
\label{GGx}
Let $M$ be a positive integer. Then
$$
\begin{aligned}
\frac{1-q^M}{1-wq^M}&\frac{1}{(q;q^8)_\infty (q^4;q^8)_\infty (q^7;q^8)_\infty}\\
&=1+\frac{q[M-1]_q+wq^M}{1-wq^M}
+\sum_{j=2}^\infty q^{j^2}\frac{[M]_{q}}{1-wq^M}\frac{(-q^3;q^2)_{j-1}}{(q^4;q^2)_{j-1}}.
\end{aligned}
$$
\end{cor}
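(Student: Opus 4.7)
The plan is to derive Corollary~\ref{GGx} by direct substitution into Proposition~\ref{mainprop}: replace $q$ by $q^2$, take $\alpha_j = q^{j^2}(-q;q^2)_j$, and read the proposition's exponent ``$M$'' as $M/2$, so that the parameter $q^{2\cdot(M/2)}=q^M$ appears throughout. Since Proposition~\ref{mainprop} is an algebraic identity whose $M$-dependence enters only through the single quantity $q^M$, this substitution is legitimate even when $M$ is odd; there is no need for $M/2$ itself to be an integer, because the proof of Proposition~\ref{mainprop} survives verbatim after renaming $q^M$ as a new parameter attached to the base $q^2$. Under these choices, the sum $\sum_j \alpha_j/(q^2;q^2)_j$ is exactly the sum side of the first G\"ollnitz--Gordon identity~\eqref{GGeq1}, whose product evaluation supplies the denominator on the left side of Corollary~\ref{GGx}.

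Next I would simplify the $j=1$ contribution on the right. Since $\alpha_1 = q(1+q)$ and $[M/2]_{q^2}=(1-q^M)/(1-q^2)$, one obtains
$$
\alpha_1\,[M/2]_{q^2}=q(1+q)\cdot\frac{1-q^M}{(1-q)(1+q)}=q[M]_q=q[M-1]_q+q^M,
$$
and the final $+q^M$ cancels the $-q^M$ already present in the numerator, leaving $(q[M-1]_q+wq^M)/(1-wq^M)$, which is the middle term of the corollary. For $j\ge 2$, I would use the factorisation $(-q;q^2)_j=(1+q)(-q^3;q^2)_{j-1}$ to extract a factor of $(1+q)$ from $\alpha_j$; combining this with the identity $(1+q)[M/2]_{q^2}=[M]_q$ converts the $j$-th summand of the substituted proposition into
$$
\frac{[M]_q}{1-wq^M}\cdot\frac{q^{j^2}(-q^3;q^2)_{j-1}}{(q^4;q^2)_{j-1}},
$$
matching the general term in Corollary~\ref{GGx}.

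The only subtle point, and the step most likely to raise a reader's eyebrow, is the formal substitution $M\mapsto M/2$ when $M$ is odd: a priori Proposition~\ref{mainprop} assumes $M$ is a positive integer. The key observation is that the proof of the proposition never uses integrality of $M$ beyond the fact that $[M]_q=(1-q^M)/(1-q)$ is a polynomial in $q$; everything else is algebraic manipulation of $q^M$ as a formal parameter. Hence the derivation reduces to the two algebraic identities $(1+q)[M/2]_{q^2}=[M]_q$ and $q[M]_q=q[M-1]_q+q^M$, together with the Euler-style splitting $(-q;q^2)_j=(1+q)(-q^3;q^2)_{j-1}$, yielding Corollary~\ref{GGx}.
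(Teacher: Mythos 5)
Your proposal is correct and is essentially the paper's own proof: the authors likewise obtain Corollary~\ref{GGx} by applying Proposition~\ref{mainprop} with $q\mapsto q^2$, $M\mapsto M/2$, $\alpha_j=q^{j^2}(-q;q^2)_j$, and then simplify the $j=1$ term via $q(1+q)[M/2]_{q^2}-q^M+wq^M=q[M-1]_q+wq^M$. Your explicit justification that the substitution $M\mapsto M/2$ is harmless for odd $M$ (since the proposition depends on $M$ only through the formal parameter $q^M$) is a point the paper leaves implicit, but it is the same argument.
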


We used 
$$
\frac{q(1+q)[M/2]_{q^2}-q^M+wq^M}{1-wq^M}=\frac{q[M-1]_q+wq^M}{1-wq^M}
$$
to simplify the second term in the sum in Corollary~\ref{GGx}. 
Note that the numerator has positive coefficients, and thus a simple combinatorial interpretation.

Here is the combinatorial restatement \cite[Theorem 2]{Gor} 
of the first G\"ollnitz-Gordon identity.

\begin{theorem} 
\label{GGcomb}
The number of partitions of $n$ 
into parts congruent to $1,4,{\text{ or }}7\mod 8$
is equal to the number of partitions of $n$ into parts whose 
difference is at least 2, and greater than 2 for consecutive even parts.
\end{theorem}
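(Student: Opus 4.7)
The plan is to show that both sides of the analytic identity \eqref{GGeq1} are generating functions for the two families appearing in Theorem~\ref{GGcomb}; equating coefficients of $q^n$ then yields the combinatorial identity. The right side of \eqref{GGeq1} is transparently the generating function for partitions with parts in $\{1,4,7\}\bmod 8$, by expanding each factor $1/(1-q^{8k+j})$ as a geometric series. All the work lies on the sum side.

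For the sum side I would prove that the $n^{\text{th}}$ summand $q^{n^2}(-q;q^2)_n/(q^2;q^2)_n$ enumerates G\"ollnitz-Gordon partitions with exactly $n$ parts. Given such a partition $\lambda=(\lambda_1,\ldots,\lambda_n)$, peel off the odd staircase by setting $\nu_i := \lambda_i-(2n-2i+1)$. The staircase $(2n-1,2n-3,\ldots,1)$ has sum $n^2$, accounting for the prefactor $q^{n^2}$. Since $\lambda_i-\lambda_{i+1}=(\nu_i-\nu_{i+1})+2$, the inequality $\lambda_i-\lambda_{i+1}\ge 2$ becomes $\nu_i\ge\nu_{i+1}$, so $\nu$ is a partition with at most $n$ parts (zero entries allowed as padding). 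Because $2n-2i+1$ is odd, $\lambda_i$ is even exactly when $\nu_i$ is odd, so the stricter G-G requirement $\lambda_i-\lambda_{i+1}\ge 3$ for consecutive even parts becomes: whenever $\nu_i,\nu_{i+1}$ are both odd, $\nu_i>\nu_{i+1}$. Equivalently, the odd entries of $\nu$ are pairwise distinct. Adding the staircase back inverts the map, so this is a bijection between G-G partitions with exactly $n$ parts and residuals $\nu$ with at most $n$ parts and distinct odd entries.

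It then remains to evaluate the generating function $f_n(q)$ of these residuals. I would split $\nu$ according to its number $j$ of odd entries: the $j$ distinct positive odd entries contribute $q^{j^2}/(q^2;q^2)_j$ (via the substitution $\alpha_i=2a_i-1$ with $a_1>\cdots>a_j\ge 1$, which converts them to partitions with at most $j$ parts weighted by $2\sum c_i+j^2$), while the remaining $n-j$ even entries form an arbitrary partition into at most $n-j$ even parts, contributing $1/(q^2;q^2)_{n-j}$. Summing,
$$
f_n(q)=\sum_{j=0}^n \frac{q^{j^2}}{(q^2;q^2)_j(q^2;q^2)_{n-j}}=\frac{1}{(q^2;q^2)_n}\sum_{j=0}^n q^{j^2}\qb{n}{j}_{q^2}=\frac{(-q;q^2)_n}{(q^2;q^2)_n},
$$
the last equality being the specialization $z=1$ of the finite $q$-binomial theorem $\prod_{i=1}^n(1+zq^{2i-1})=\sum_j z^j q^{j^2}\qb{n}{j}_{q^2}$. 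The main obstacle is the parity bookkeeping: one must carefully check that ``distinct odd entries of $\nu$'' is exactly the translation of the G-G gap condition for consecutive even parts, and that the odd/even decomposition of $\nu$ is genuinely independent (i.e.\ any set of distinct positive odd integers and any partition with at most $n-j$ even parts recombine uniquely into a valid $\nu$). Both points are routine once set up correctly, and no deeper combinatorial input is required beyond the $q$-binomial theorem.
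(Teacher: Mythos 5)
Your proof is correct. The first half --- removing the odd staircase $(2n-1,2n-3,\ldots,1)$ and translating the gap conditions into ``the residual $\nu$ has at most $n$ parts and its odd entries are distinct'' --- is exactly the reduction the paper uses (the paper writes $\mu=\lambda-(2j-1,\ldots,1)$ and notes the odd parts of $\mu$ are distinct); your parity bookkeeping, including the observation that distinctness of adjacent odd entries forces distinctness of all odd entries in a weakly decreasing sequence, is sound. Where you diverge is in establishing that these residuals have generating function $(-q;q^2)_n/(q^2;q^2)_n$. The paper does this by a purely bijective column construction on the conjugate $\lambda^*$: columns of each length $1,\ldots,j$ taken with even multiplicity give $1/(q^2;q^2)_j$, and each selected odd integer $2k-1=k+(k-1)$ contributes one column of length $k$ and one of length $k-1$, giving $(-q;q^2)_j$; no summation identity is needed, and this column-level bookkeeping is precisely what the authors exploit afterwards to see which factor records the $1$'s of $\lambda^*$ for the refined Theorem~\ref{GGxcomb}. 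You instead split $\nu$ by rows into its $j$ distinct odd parts and its $n-j$ even parts and sum over $j$, invoking the finite $q$-binomial identity $\sum_j q^{j^2}\qb{n}{j}_{q^2}=(-q;q^2)_n$. Your route is a clean, self-contained alternative (modulo the analytic identity \eqref{GGeq1}, which both arguments take as input --- the paper in fact cites Gordon for the theorem itself and only sketches the mechanism); its only cost is reliance on one classical identity in place of the paper's explicit bijection, and it does not by itself isolate the $(1+q)/(1-q^2)$ factor that the paper needs for the marked refinement.
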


For the combinatorial version of Corollary~\ref{GGx}, we need to recall why 
the sum side of \eqref{GGeq1} is the generating function for the restricted 
partitions with difference at least 2.  In particular we must identify 
what the denominator factor $1-q$ represents in the sum side.

Suppose $\lambda$ is such a partition with $j$ parts. 
This is equivalent to showing that the generating function for 
$\lambda^*$ is
\begin{equation}
\label{GGfactor}
\frac{(-q;q^2)_j}{(q^2;q^2)_j}=\frac{1+q}{1-q^2}\frac{(-q^3;q^2)_{j-1}}{(q^4;q^2)_{j-1}}.
\end{equation}
The partition $\mu=\lambda-(2j-1,2j-3,\cdots, 1)$ has at most $j$ parts, and 
the odd parts of $\mu$ are distinct. The column read version $\lambda^*=\mu^t$ can be 
built in the following way. Take arbitrary parts from sizes $j, j-1, \cdots, 1$ with even multiplicity,
whose generating function is 
$1/(q^2;q^2)_j.$ The rows now have even length. Then choose a subset 
of the odd integers $1+0, 2+1, \cdots j+(j-1).$ For each such odd part $k+(k-1)$ add 
columns of length $k$ and $k-1$. This keeps all rows even, except the $k^{th}$ 
row which is odd and distinct.

We see that the factor $(1+q)/(1-q^2)=1/(1-q)$ in \eqref{GGfactor} accounts for $1$'s in 
$\lambda^*.$ 
In Corollary~\ref{GGx} this quotient is replaced by
$$
\frac{1+q}{1-q^2}\rightarrow \frac{[M]_q}{1-wq^M}=\sum_{p=0}^\infty q^p w^{[p/M]}.
$$
There is one final opportunity for a $1$ to appear in $\lambda^*$: when $3=2+1$ is chosen as 
an odd part. This occurs only when the second part of $\lambda$ is even.

\begin{theorem} 
\label{GGxcomb}
Let $M$ be a positive integer which is congruent to $1,4$ or $7\mod 8.$
The number of partitions of $n\ge 1$ 
into parts congruent to $1,4{\text{ or }}7\mod 8$ with exactly $k$ $M$'s, 
is equal to the number of partitions $\lambda$ of $n$ into parts whose 
difference is at least 2, and greater than 2 for consecutive even parts such that
\begin{enumerate}
\item if $\lambda$ has a single part, then $[n/M]=k,$
\item if $\lambda$ has at least two parts and the second part of $\lambda$ is even, 
$$\lfloor (\lambda_1-\lambda_2-3)/M\rfloor=k,$$
\item if $\lambda$ has at least two parts and the second part of $\lambda$ is odd, 
$$\lfloor(\lambda_1-\lambda_2-2)/M\rfloor=k.$$
\end{enumerate}
\end{theorem}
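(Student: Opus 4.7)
The plan is to interpret the right-hand side of Corollary~\ref{GGx} combinatorially, matching its three kinds of contributions to the three cases of the theorem. The constant $1$ gives $n=0$, and the single-part block satisfies $(q[M-1]_q + wq^M)/(1-wq^M) = \sum_{n\ge 1}q^n w^{\lfloor n/M\rfloor}$, so it matches case (1) directly. All the substance therefore lies in the $j\ge 2$ sum.

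For a fixed $j\ge 2$, the summand arises from the classical G\"ollnitz-Gordon $j$th term $q^{j^2}(-q;q^2)_j/(q^2;q^2)_j$ by substituting $(1+q)/(1-q^2) = 1/(1-q) \mapsto [M]_q/(1-wq^M) = \sum_{p\ge 0} q^p w^{\lfloor p/M\rfloor}$. Using the construction of $\lambda^*$ recalled just before the theorem, the factor $1/(1-q)$ records those $1$'s in $\lambda^*$ produced by the ``$i=1$'' slot (even multiplicity of the part $2$ combined with the indicator that $1$ is selected as an odd part). Hence the substitution weights a configuration with $p$ such $1$'s by $w^{\lfloor p/M\rfloor}$, forcing $k=\lfloor p/M\rfloor$.

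Next I would convert $p$ into a statistic on $\lambda$. Since $\lambda^* = \mu^t$, the total number of $1$'s in $\lambda^*$ equals $\mu_1 - \mu_2 = \lambda_1 - \lambda_2 - 2$, but one of these $1$'s may instead be supplied by the $(1+q^3)$ factor inside $(-q^3;q^2)_{j-1}$ through the split $3=2+1$ used in the $\lambda^*$ build. Let $\delta\in\{0,1\}$ indicate whether this extra $1$ is engaged; then $p = \lambda_1 - \lambda_2 - 2 - \delta$.

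The main obstacle is to pin down $\delta$ from $\lambda$ alone. The plan is to argue, directly from the construction of $\lambda^*$, that $3 = 2+1$ is used exactly when the second row of $\mu$ is odd; and since $\mu_i = \lambda_i - (2j - 2i + 1)$ has the opposite parity from $\lambda_i$, this is equivalent to $\lambda_2$ being even. With $\delta$ so determined, $\delta = 1$ recovers case (2) and $\delta = 0$ recovers case (3), giving $k = \lfloor p/M \rfloor$ in each sub-case and completing the proof.
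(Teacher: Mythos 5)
Your proposal is correct and follows essentially the same route as the paper: the paper's own justification is exactly the analysis of the $\lambda^*=\mu^t$ construction, identifying the $1$'s counted by the factor $(1+q)/(1-q^2)=1/(1-q)$, noting the one extra $1$ supplied when $3=2+1$ is chosen, and observing that this happens precisely when $\lambda_2$ is even (since $\mu_2=\lambda_2-(2j-3)$ has opposite parity). The only quibble is the parenthetical ``even multiplicity of the part $2$,'' which should read ``even multiplicity of columns of length $1$''; this does not affect the argument.
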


\begin{example} 
Let $k=3$, $M=7$, and $n=31.$ The 
equinumerous sets of partitions for Theorem~\ref{GGxcomb} are
$$
\begin{aligned}
&\{(9,7,7,7,1), (7,7,7,4,4,1,1), (7,7,7,4,1^6), (7,7,7,1^{10})\}\leftrightarrow\\
&\{(30,1), (29,2),(28,3), (27,3,1)\}.
\end{aligned}
$$
Note that $\lambda=(27,4)$ is not allowed because the 
second part of $\lambda$ is even.
\end{example}

For the second G\"ollnitz-Gordon identity, the version of Corollary~\ref{GGx} is

\begin{equation}
\begin{aligned}
\frac{1-q^M}{1-wq^M}&\frac{1}{(q^3;q^8)_\infty (q^4;q^8)_\infty (q^5;q^8)_\infty}\\
=&1+\frac{q^3+\cdots +q^{M-1}+wq^M+q^{M+1}+q^{M+2}}{1-wq^M}\\
+&\sum_{j=2}^\infty q^{j^2+2j}\frac{[M]_{q}}{1-wq^M}\frac{(-q^3;q^2)_{j-1}}{(q^4;q^2)_{j-1}}.
\end{aligned}
\end{equation}

Here is the combinatorial refinement of \cite[Theorem 3]{Gor}.

\begin{theorem} 
\label{GGxcombM}
Let $M$ be a positive integer which is congruent to $3,4$ or $5\mod 8.$
The number of partitions of $n\ge 1$ 
into parts congruent to $3,4{\text{ or }}5\mod 8$ with exactly $k$ $M$'s, 
is equal to the number of partitions $\lambda$ of $n$ into parts whose 
difference is at least 2, greater than 2 for consecutive even parts, smallest part at least 3, such that
\begin{enumerate}
\item if $\lambda$ has a single part, then $n=Mk,$ or $n=Mk+j$, $3\le j\le M+2$, $j\neq M$,
\item if $\lambda$ has at least two parts and the second part of $\lambda$ is even, 
$$\lfloor (\lambda_1-\lambda_2-3)/M\rfloor=k,$$
\item if $\lambda$ has at least two parts and the second part of $\lambda$ is odd, 
$$\lfloor(\lambda_1-\lambda_2-2)/M\rfloor=k.$$
\end{enumerate}
\end{theorem}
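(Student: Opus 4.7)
My plan is to parallel the proof of Theorem~\ref{GGxcomb}, substituting the data associated with the second G\"ollnitz--Gordon identity. The displayed marked identity follows from Proposition~\ref{mainprop} applied with $q$ replaced by $q^2$, $M$ by $M/2$, and $\alpha_j=q^{j^2+2j}(-q;q^2)_j$, together with the factorization $\alpha_j=q^{j^2+2j}(1+q)(-q^3;q^2)_{j-1}$ that converts the leading odd factor into $(1+q)/(1-q^2)=1/(1-q)$; the simplification of the $j=1$ term is the analog of the one shown right after Corollary~\ref{GGx}, now starting from $q^3(1+q)[M/2]_{q^2}=q^3[M]_q=q^3+q^4+\cdots+q^{M+2}$, subtracting $q^M$, and inserting $wq^M$. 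Once we assume $M\equiv 3,4,5\pmod 8$, the left-hand side is the generating function for partitions of $n$ into parts $\equiv 3,4,5\pmod 8$ weighted by $w$ for each part equal to $M$; the constant $1$ on the right corresponds to the empty partition and is excluded by $n\ge 1$.

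For case~(1), I expand the $j=1$ term geometrically in $wq^M$ and check that the coefficient of $w^kq^n$ is $1$ exactly for those $n\ge 3$ admitting a unique representation $n=Mk$ or $n=Mk+j$ with $j\in\{3,\dots,M-1,M+1,M+2\}$. Since the residues of the allowed $j$'s modulo $M$ are $\{0,1,2,3,\dots,M-1\}$ (using $M+1\equiv 1$ and $M+2\equiv 2$), these representations partition $\{n:n\ge 3\}$ uniquely, so the $j=1$ term is precisely the generating function for single-part $\lambda=(n)$ of the stated form. For $j\ge 2$, I would repeat the construction just before Theorem~\ref{GGxcomb}, replacing the staircase $(2j-1,\dots,1)$ by $(2j+1,2j-1,\dots,3)$ of total size $j^2+2j$; the hypothesis ``smallest part $\ge 3$'' is exactly what permits the subtraction to remain nonnegative, and the resulting $\mu$ has at most $j$ parts with distinct odd parts, whose column reading $\lambda^{**}$ has generating function factoring as in~\eqref{GGfactor}.

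The final step is the parity analysis identifying which $1$'s in $\lambda^{**}$ are tracked by the replacement $1/(1-q)\to[M]_q/(1-wq^M)$. The extracted factor counts $1$'s from the even-multiplicity step and from selecting the odd part $1+0$, but misses the extra $1$ produced when $3=2+1$ is chosen; the latter occurs iff $\mu_2$ is odd, equivalently (since $2j-1$ is odd) iff $\lambda_2$ is even. Because the total number of $1$'s in $\lambda^{**}$ remains $\lambda_1-\lambda_2-2$ (the consecutive staircase differences match those of the first identity), the marked count is $\lambda_1-\lambda_2-3$ when $\lambda_2$ is even and $\lambda_1-\lambda_2-2$ when $\lambda_2$ is odd, giving cases~(2) and~(3) respectively. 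I expect the main obstacle to be only bookkeeping: verifying that the odd shift in the new staircase preserves the parity relation $\mu_i\not\equiv\lambda_i\pmod 2$ used in Theorem~\ref{GGxcomb}, and checking the uniqueness of the single-part representation underpinning case~(1).
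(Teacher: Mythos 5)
Your proposal is correct and follows essentially the same route the paper takes: it applies Proposition~\ref{mainprop} with $q\to q^2$, $M\to M/2$, $\alpha_j=q^{j^2+2j}(-q;q^2)_j$, simplifies the $j=1$ term to $q^3[M]_q-q^M+wq^M$ exactly as the paper does for Corollary~\ref{GGx}, and then repeats the staircase-removal and parity analysis surrounding \eqref{GGfactor} (with $(2j+1,2j-1,\dots,3)$ in place of $(2j-1,\dots,1)$, noting $\mu_1-\mu_2=\lambda_1-\lambda_2-2$ and that the extra $1$ from choosing $3=2+1$ appears iff $\lambda_2$ is even). The checks you flag as potential obstacles (oddness of the staircase entries, uniqueness of the representation $n=Mk+j$ with $j\in\{0,3,\dots,M-1,M+1,M+2\}$ covering all residues mod $M$) do go through, so the argument is complete.
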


\section{An Andrews-Gordon version}

The Andrews-Gordon identities are
\begin{theorem} 
\label{AG}
If $0\le a \le k$, then
$$
\frac{(q^{k+1-a},q^{k+2+a},q^{2k+3};q)_\infty}{(q;q)_\infty}=
\sum_{n_1\ge n_{2} \ge \cdots \ge n_k\ge 0} 
\frac{q^{n_1^2+n_2^2+\cdots +n_k^2+n_{k+1-a}+\cdots +n_k}}
{(q)_{n_1-n_2}\cdots (q)_{n_{k-1}-n_k}(q)_{n_k}}.
$$
\end{theorem}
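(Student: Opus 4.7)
The plan is to prove Theorem~\ref{AG} via the iteration of Bailey's lemma, the approach taken in Andrews' original proof. Recall that a \emph{Bailey pair} relative to $a$ is a pair of sequences $(\alpha_n,\beta_n)$ satisfying
$$\beta_n = \sum_{r=0}^n \frac{\alpha_r}{(q;q)_{n-r}(aq;q)_{n+r}},$$
and Bailey's lemma transforms one such pair into another by introducing two auxiliary parameters $\rho_1,\rho_2$. Iterating the lemma $k$ times yields $k$-fold multisum identities, which is exactly the shape needed for the right side of the theorem.

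First I would start from the unit Bailey pair relative to $a=1$, namely $\alpha_0=1$, $\alpha_n=0$ for $n\ge 1$, and $\beta_n=1/(q;q)_n$. Applying Bailey's lemma $k$ times and letting $\rho_1,\rho_2\to\infty$ at each step produces a $k$-fold sum with factors $q^{n_1^2+\cdots+n_k^2}$ and the denominator $(q)_{n_1-n_2}\cdots(q)_{n_k}$. To obtain the asymmetric linear exponent $n_{k+1-a}+\cdots+n_k$ for a given $a$, I would retain one auxiliary parameter as a finite variable at $a$ of the $k$ iteration steps, letting those parameters tend to infinity afterwards in a controlled manner so as to produce exactly the linear weights called for in Theorem~\ref{AG}.

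Next, on the other side of Bailey's identity the same sequence of iterations, followed by Watson's transformation of the resulting very-well-poised basic hypergeometric series, reduces the expression to a bilateral theta series of the form
$$\sum_{n\in\Z}(-1)^n q^{n((2k+3)n+2a+1)/2}.$$
By Jacobi's triple product identity this equals $(q^{k+1-a},q^{k+2+a},q^{2k+3};q^{2k+3})_\infty$, and dividing by the overall $(q;q)_\infty$ accumulated during the iteration recovers the left side of Theorem~\ref{AG}.

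The main obstacle is the careful tracking of the auxiliary parameters $\rho_1,\rho_2$ through the $k$ iterations, and in particular aligning $a$ so that it simultaneously governs both the pattern of the linear exponent $n_{k+1-a}+\cdots+n_k$ on the sum side and the shifts $k+1-a$ and $k+2+a$ appearing on the product side. A cleaner alternative is Bressoud's matrix-product approach, which packages the repeated use of Bailey's lemma into an explicit matrix action on a seed vector; either route ultimately reduces to Jacobi's triple product at the final step.
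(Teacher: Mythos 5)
The paper itself offers no proof of Theorem~\ref{AG}; it is quoted as the classical Andrews--Gordon identity, so your proposal is being judged on its own. The Bailey-chain strategy you describe is indeed a standard route, but as written it has two concrete gaps. First, your seed is wrong: the pair $\alpha_0=1$, $\alpha_n=0$ for $n\ge 1$ forces $\beta_n=1/\bigl((q;q)_n(aq;q)_n\bigr)$, not $1/(q;q)_n$, so it is not the pair you claim; more importantly, the correct seed for this derivation is the \emph{unit} Bailey pair $\beta_n=\delta_{n,0}$ with the nontrivial alternating $\alpha_n=(-1)^nq^{\binom{n}{2}}\frac{(1-aq^{2n})(a;q)_n}{(1-a)(q;q)_n}$. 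With your seed every iterate satisfies $\alpha_n^{(j)}=\delta_{n,0}$, the terminal sum $\sum_r\alpha_r^{(k)}$ collapses to $1$, and the product side degenerates to $1/(q;q)_\infty$ instead of the theta quotient; the Jacobi triple product has nothing to act on. Relatedly, Watson's transformation is not applied \emph{after} the iteration --- it is essentially equivalent to a single link of the chain --- and the final step is simply to let $n\to\infty$ in the defining relation of the $k$-th Bailey pair and apply the triple product to $\sum_r a^{kr}q^{kr^2}\alpha_r$.

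Second, the mechanism you propose for the intermediate cases $0<a<k$ --- ``retaining one auxiliary parameter as a finite variable at $a$ of the $k$ iteration steps'' --- does not produce the linear exponent $n_{k+1-a}+\cdots+n_k$. Keeping $\rho_2$ finite introduces factors $(\rho_2;q)_j\rho_2^{-j}$ and extra denominator factors $(aq/\rho_2;q)_n$ that do not simplify to the stated multisum. The two extreme cases $a=k$ and $a=0$ come from iterating relative to $1$ and to $q$ respectively (since the iteration multiplies by $a^jq^{j^2}$); to interpolate one needs either Andrews' original $J_{k,i}$ functional-equation argument or the Bailey \emph{lattice} of Agarwal--Andrews--Bressoud, which changes the parameter from $q$ to $1$ at the $(k-a)$-th link of the chain and is precisely what inserts the tail $n_{k+1-a}+\cdots+n_k$. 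Your identification of the resulting theta series and its triple-product evaluation as $(q^{k+1-a},q^{k+2+a},q^{2k+3};q^{2k+3})_\infty$ is correct, but the path to it needs the repairs above.
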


The Rogers-Ramanujan identities are the cases $k=1$, $a=0,1.$

Because Theorem~\ref{AG} has a multisum instead 
of a single sum, we cannot apply Proposition~\ref{mainprop}. Nonetheless the 
same idea can be applied to obtain a marked version of Theorem~\ref{AG}. 

Let $F_k^a$ denote the right side multisum of Theorem~\ref{AG} for 
$0\le a\le k$, and let $F_k^a=F_k^0$ for $a<0.$
So we have
$$
F_k^a=F_{k-1}^{a-1}+\sum_{n_1\ge n_{2} \ge \cdots \ge n_k\ge 1} 
\frac{q^{n_1^2+n_2^2+\cdots +n_k^2+n_{k+1-a}+\cdots +n_k}}
{(q)_{n_1-n_2}\cdots (q)_{n_{k-1}-n_k}(q)_{n_k}}.
$$

Multiplying by $\frac{1-q^M}{1-xq^M}$ yields
$$
\begin{aligned}
&\frac{1-q^M}{1-xq^M} F_k^a=\frac{1-q^M}{1-xq^M} 
F_{k-1}^{a-1}\\
&+\sum_{n_1\ge n_{2} \ge \cdots \ge n_k\ge 1} 
\frac{q^{n_1^2+n_2^2+\cdots +n_k^2+n_{k+1-a}+\cdots +n_k}}
{(q)_{n_1-n_2}\cdots (q)_{n_{k-1}-n_k}(q^2;q)_{n_k-1}}\frac{[M]_q}{1-xq^M},
\end{aligned}
$$
which, upon iterating, is the following weighted version of the 
Andrews-Gordon identities.

\begin{theorem} 
\label{AGbig}
For $0\le a\le k$, let $M$ be any positive integer 
not congruent to $0$, $\pm (k+1-a)$ modulo $2k+3.$ Then
$$
\begin{aligned}
&\frac{1-q^M}{1-xq^M}\frac{(q^{k+1-a},q^{k+2+a},q^{2k+3};q)_\infty}{(q;q)_\infty}
=1+A+
\sum_{n_1=2}^\infty \frac{q^{n_1^2+B}}{(q^2;q)_{n_1-1}}\frac{[M]_q}{1-xq^M}\\
&+\sum_{r=2}^k\sum_{n_1\ge n_{2} \ge \cdots \ge n_r\ge 1} 
\frac{q^{n_1^2+n_2^2+\cdots +n_r^2+n_{k+1-a}+\cdots +n_r}}
{(q)_{n_1-n_2}\cdots (q)_{n_{r-1}-n_r}(q^2;q)_{n_r-1}}\frac{[M]_q}{1-xq^M}
\end{aligned}
$$
where 
\begin{enumerate} 
\item {\text{ for }}$0\le a<k, \quad B=0, \quad A=q([M-1]_q+xq^{M-1})/(1-xq^M)$\\
\item {\text{ for }}$a=k, \quad B=n_1, \quad A=q^2([M-2]_q+xq^{M-2}+q^{M-1})/(1-xq^M).$
\end{enumerate}
\end{theorem}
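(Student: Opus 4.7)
The plan is to iterate the single-variable replacement $1/(1-q)\mapsto [M]_q/(1-xq^M)$ from Proposition~\ref{mainprop}, one index at a time, from the innermost $n_k$ outward. First I would use the recursion $F_k^a = F_{k-1}^{a-1}+G_k^a$ displayed just before the theorem, which splits the multisum by $n_k=0$ versus $n_k\ge 1$; factoring $(q;q)_{n_k}=(1-q)(q^2;q)_{n_k-1}$ in $G_k^a$ and multiplying by $(1-q^M)/(1-xq^M)$ collapses the factor to $[M]_q/(1-xq^M)$, yielding the $r=k$ sum of the theorem.

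I would then apply the same split-and-replace to $(1-q^M)/(1-xq^M)\cdot F_{k-1}^{a-1}$, producing the $r=k-1$ sum, and iterate in total $k-1$ times. At step $j$ the recursion is $F_{k-j+1}^{a-j+1} = F_{k-j}^{a-j} + G_{k-j+1}^{a-j+1}$, with the convention $F_i^b=F_i^0$ once $b<0$. The key observation is that the linear exponent in $G_r^{a-(k-r)}$ is $n_{k+1-a}+\cdots+n_r$, read as the empty sum when $r<k+1-a$. After the $k-1$ iterations the residue is $(1-q^M)/(1-xq^M)\cdot F_1^{a-k+1}$, where $F_1^{a-k+1}=F_1^0$ (exponent $n_1^2$) if $a<k$ and $F_1^{a-k+1}=F_1^1$ (exponent $n_1^2+n_1$) if $a=k$, accounting for the two cases $B=0$ and $B=n_1$.

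A final application of the replacement splits $(1-q^M)/(1-xq^M)\cdot F_1^{a-k+1}$ into its $n_1=0$ term $(1-q^M)/(1-xq^M)$, its $n_1=1$ term $q^{1+B_1}[M]_q/(1-xq^M)$ (with $B_1\in\{0,1\}$ from above), and the $n_1\ge 2$ tail that is exactly the $r=1$ sum shown. Combining the first two yields
$$\frac{1-q^M}{1-xq^M}+\frac{q^{1+B_1}[M]_q}{1-xq^M}=1+\frac{q^{1+B_1}[M]_q+(x-1)q^M}{1-xq^M},$$
and $q[M]_q=q[M-1]_q+q^M$ (case $a<k$) or $q^2[M]_q=q^2[M-2]_q+q^M+q^{M+1}$ (case $a=k$) reduces the right-hand fraction to the stated $A$. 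The main obstacle is the bookkeeping of the linear exponent across iterations, particularly how the range $k+1-a,\dots,r$ collapses; once that is handled, each iteration is the Proposition~\ref{mainprop}-style rewrite and the closing simplification is elementary.
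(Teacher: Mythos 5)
Your proposal is correct and follows essentially the same route as the paper: the paper's proof consists precisely of the recursion $F_k^a=F_{k-1}^{a-1}+G_k^a$, the one-step rewrite of $(q)_{n_k}=(1-q)(q^2;q)_{n_k-1}$ after multiplying by $(1-q^M)/(1-xq^M)$, and the phrase ``upon iterating.'' Your endpoint bookkeeping (the cases $F_1^0$ versus $F_1^1$ and the simplification of the $n_1=0,1$ terms into $1+A$) correctly supplies the details the paper leaves implicit.
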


For a combinatorial version of Theorem~\ref{AGbig} we use Andrews'  
Durfee dissections, and $(k+1,k+1-a)$-admissible partitions, see \cite{And2}. 

\begin{defn} Let $k$ be a positive integer and $0\le a\le k.$
A partition $\lambda$ is called $(k+1,k+1-a)$-admissible  if
$\lambda$ may be dissected by $r\le k$ successive Durfee rectangles, 
moving down, of sizes
$$
n_1\times n_1, \cdots, n_{k-a}\times n_{k-a},\  (n_{k-a+1}+1)\times n_{k-a},
\cdots ,(n_r+1) \times n_r.
$$
such that the $(n_1+n_2+\cdots+ n_{k-a+i}+i)^{th}$ part of $\lambda$ is $
n_{k-a+i},$ for $1\le i\le r-(k-a).$
\end{defn}

Note that $r\le k-a$ is allowed, in which case all of the Durfee rectangles are squares.
Also, the parts of $\lambda$ to the right of the Durfee rectangles are not constrained, 
except at the last row of the non-square Durfee rectangle, where it is empty.

\begin{example} Suppose $k=3$ and $a=2$. 
Then $\lambda=91$ is not $(4,2)$-admissible: the Durfee square has size $n_1=1$, but the 
next Durfee rectangle of size $2\times 1$ does not exist, so the second part 
cannot be covered if $r\ge 2.$
\end{example}
 
Theorem 2 in \cite{And2} interprets Theorem~\ref{AG}.

\begin{prop} The generating function for all partitions which are $(k+1,k+1-a)$-admissible 
is given by the sum in Theorem~\ref{AG}. 
\end{prop}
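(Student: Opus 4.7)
My plan is to prove the proposition via the Durfee-rectangle dissection argument of Andrews, establishing a weight-preserving bijection between $(k+1,k+1-a)$-admissible partitions of $n$ and the index data of the multisum in Theorem~\ref{AG}.

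First, given an admissible partition $\lambda$ with $r\le k$ successive Durfee shapes of the prescribed sizes $n_1\times n_1,\ldots,n_{k-a}\times n_{k-a},(n_{k-a+1}+1)\times n_{k-a+1},\ldots,(n_r+1)\times n_r$, I send $\lambda$ to the tuple $(n_1,\ldots,n_r,0,\ldots,0)\in\mathbb{Z}_{\ge 0}^k$, a weakly decreasing sequence that indexes a unique summand.

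Next, I would account for the weight. Each Durfee square $n_i\times n_i$ contributes $q^{n_i^2}$, while each Durfee rectangle $(n_i+1)\times n_i$ contributes $q^{n_i^2+n_i}$, so the Durfee shapes together produce the numerator $q^{n_1^2+\cdots+n_k^2+n_{k+1-a}+\cdots+n_k}$ of the summand, with the trailing zeros contributing nothing. The cells of $\lambda$ lying outside the Durfee shapes decompose via the dissection into residual data whose joint generating function is $1/[(q)_{n_1-n_2}\cdots(q)_{n_{k-1}-n_k}(q)_{n_k}]$; the factor $1/(q)_{n_{i-1}-n_i}$ arises at each level from the $n_{i-1}-n_i$ independent column-height choices between the right edges of consecutive Durfee shapes, and the tail factor $1/(q)_{n_k}$ accounts for the columns beyond the last shape.

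The main obstacle will be to verify that the admissibility constraint---that the $(n_1+\cdots+n_{k-a+i}+i)$-th part of $\lambda$ equals $n_{k-a+i}$ for each $1\le i\le r-(k-a)$---is precisely what distinguishes a Durfee rectangle $(n_{k-a+i}+1)\times n_{k-a+i}$ from an ordinary Durfee square of the same width and produces the extra exponent $n_{k+1-a}+\cdots+n_k$ in the summand. Without this condition the canonical dissection would be ambiguous; with it, each admissible $\lambda$ has a unique dissection, and conversely each choice of index tuple together with residual partition data reassembles uniquely into an admissible $\lambda$. As the content of this proposition is Theorem~2 of \cite{And2}, the proof reduces to matching our statement of admissibility to Andrews' and invoking his result.
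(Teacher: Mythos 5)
Your proposal is correct and follows essentially the same route as the paper, which proves nothing itself and simply invokes Theorem~2 of Andrews' Durfee dissection paper \cite{And2}; your sketch of the dissection bijection (Durfee shapes giving the numerator $q^{n_1^2+\cdots+n_k^2+n_{k+1-a}+\cdots+n_k}$, the residual columns giving $1/[(q)_{n_1-n_2}\cdots(q)_{n_k}]$, and the admissibility condition forcing the bottom row of each non-square rectangle to be empty to its right) is exactly the content of that citation. Note only that you have silently corrected the paper's typo in the rectangle sizes, writing $(n_{k-a+1}+1)\times n_{k-a+1}$ where the paper prints $(n_{k-a+1}+1)\times n_{k-a}$.
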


We need to understand the replacement
$$
\frac{1}{(q)_{n_r}}=\frac{1}{(1-q) (q^2;q)_{n_r-1}}
\rightarrow \frac{1}{(q^2;q)_{n_r-1}}\frac{[M]_q}{1-xq^M}
$$ 
in the factor $(q)_{n_r}$ to give a combinatorial version of Theorem~\ref{AGbig}.

First we recall \cite{And2} 
that if the sizes of the Durfee rectangles are fixed by 
$n_1,n_2,\cdots ,n_r,$ then the generating function for the partitions which 
have this Durfee dissection is
$$
\frac{1}{(q)_{n_1}}\prod_{j=1}^{r-1} \qb{n_j}{n_{j+1}}_q=
\frac{1}{(q)_{n_r}}\prod_{j=1}^{r-1}\frac{1}{(q)_{n_j-n_{j+1}}}.
$$
(A simple bijection for this fact is given in \cite{Gre}.) Upon multiplying by 
$$(1-q^M)/(1-xq^M)$$ we have
$$
\frac{[M]_q}{1-xq^M}\frac{1}{(q^2;q)_{n_1-1}}\prod_{j=1}^{r-1} \qb{n_j}{n_{j+1}}_q=
\frac{[M]_q}{1-xq^M}\frac{1}{(q^2;q)_{n_r-1}}\prod_{j=1}^{r-1}\frac{1}{(q)_{n_j-n_{j+1}}}.
$$ 

Consider the factor $1/(q)_{n_1},$ which accounts for the portion of the 
partition to the right of the first Durfee rectangle of $\lambda.$ 
In this factor we are replacing
$$
\frac{1}{1-q}\rightarrow \frac{[M]_q}{1-xq^M}.
$$ 
As before, the $M$ $1$'s in the columns to the right of the first 
Durfee rectangle are weighted by $x$. These $1$'s are again a difference 
in the first two parts of $\lambda.$ 

Putting these pieces together, the following result is a 
combinatorial restatement of Theorem~\ref{AGbig}.

\begin{theorem} 
\label{AGcombM}
Fix integers $a,k,M$ satisfying $0\le a\le k$ and 
$M\not\equiv 0, \pm (k+1-a)\mod 2k+3$. 
The number of partitions of $n$ 
into parts not congruent to $0$, $\pm (k+1-a)\mod 2k+3$ with exactly $j$ $M$'s, is 
equal to the number of partitions $\lambda$ of $n$ which are $(k+1,k+1-a)$-admissible
with $r\le k$ Durfee rectangles of sizes 
$$
n_1\times n_1, \cdots, n_{k-a}\times n_{k-a}, (n_{k-a+1}+1)\times n_{k-a},
\cdots ,(n_r+1) \times n_r
$$
of the following form:
\begin{enumerate}
\item if $r=n_1=1,$ and $0\le a<k$, 
$\lambda$ is a single part of size $Mj$, $Mj+1, \cdots , Mj+(M-1)$, or
\item if $r=n_1=1,$ and $a=k$, 
$\lambda=(\lambda_1,1)$ has size $Mj$, $Mj+2, \cdots ,Mj+(M-1),$ or $Mj+(M+1)$.  
\item if  $n_1=1$ and $r\ge 2,$ then $\lfloor (\lambda_1-n_1)/M \rfloor=j,$
\item if  $n_1\ge 2,$ then $\lfloor (\lambda_1-\lambda_2)/M \rfloor=j.$
\end{enumerate}
\end{theorem}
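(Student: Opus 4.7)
The plan is to derive Theorem~\ref{AGcombM} from the analytic identity Theorem~\ref{AGbig} by comparing coefficients of $x^j q^n$ on both sides, following exactly the template that produces Theorems~\ref{RRxcomb1} and~\ref{RRx2comb1} from Theorem~\ref{RR2x}. On the LHS, the multiplier $\frac{1-q^M}{1-xq^M}$ replaces the factor $\frac{1}{1-q^M}$ in the Andrews--Gordon product by $\frac{1}{1-xq^M}=\sum_{j\ge 0}(xq^M)^j$, so that the LHS equals $\sum_\lambda x^{m_M(\lambda)}q^{|\lambda|}$ summed over partitions $\lambda$ with parts in the allowed residue classes, where $m_M(\lambda)$ is the multiplicity of $M$ in $\lambda$. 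Matching coefficients of $x^j q^n$ on this side gives precisely the count appearing in the statement.

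On the RHS, Andrews' Theorem~2 from \cite{And2} tells us that, after fixing the Durfee dissection sizes $n_1\ge\cdots\ge n_r\ge 1$ in the multi-sum $F_k^a$ of Theorem~\ref{AG}, the corresponding summand generates exactly the $(k+1,k+1-a)$-admissible partitions of that dissection shape. To interpret the marked version, I use the bijective rewriting from \cite{Gre},
$$
\frac{1}{(q)_{n_r}}\prod_{j=1}^{r-1}\frac{1}{(q)_{n_j-n_{j+1}}}=\frac{1}{(q)_{n_1}}\prod_{j=1}^{r-1}\qb{n_j}{n_{j+1}}_q,
$$
so that the factor $\frac{1}{(q)_{n_1}}$ enumerates the arm to the right of the first Durfee rectangle as a partition with at most $n_1$ rows. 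Within $\frac{1}{(q)_{n_1}}=\prod_{i=1}^{n_1}\frac{1}{1-q^i}$ the factor $\frac{1}{1-q}$ counts columns of length exactly $1$ in this arm, and this column count equals $\lambda_1-\lambda_2$ when $n_1\ge 2$ (the arm has at least two rows) and equals $\lambda_1-n_1$ when $n_1=1$ (the arm is a single row, every column of which has length $1$). The substitution $\frac{1}{1-q}\mapsto\frac{[M]_q}{1-xq^M}=\sum_{p\ge 0}q^p x^{\lfloor p/M\rfloor}$ thus assigns to each admissible $\lambda$ the weight $x^{\lfloor p/M\rfloor}$ with $p$ equal to this column count, yielding cases~(4) and (3) for $n_1\ge 2$ and for $n_1=1,r\ge 2$ respectively.

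The remaining $r=n_1=1$ contributions are gathered into the term $1+A$ of Theorem~\ref{AGbig}. A direct expansion of $A$ as a power series in $x$ and $q$ shows that when $a<k$,
$$
A=\frac{q[M-1]_q+xq^M}{1-xq^M}=\sum_{\lambda_1\ge 1}x^{\lfloor\lambda_1/M\rfloor}q^{\lambda_1},
$$
generating the single-part admissible partitions as in case~(1); and when $a=k$, the expansion of $A=(q^2[M-2]_q+xq^M+q^{M+1})/(1-xq^M)$ distributes weight $x^j$ exactly over the two-part admissible partitions $\lambda=(\lambda_1,1)$ with total size $n\in\{Mj\}\cup\{Mj+2,\ldots,Mj+(M-1),Mj+(M+1)\}$, matching case~(2). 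The apparent gap at sizes $Mj+1$ and $Mj+M$ arises because the multiplier $(1-q^M)/(1-xq^M)$ redistributes the empty-partition term and the $q^2/(1-q)$ term of $F_k^a$ between adjacent values of $j$.

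The main obstacle is to make the identification in the second paragraph uniform across all admissible shapes: one must argue that after the Greene-style rewriting, the factor $\frac{1}{1-q}$ inside $\frac{1}{(q)_{n_1}}$ truly counts length-$1$ columns of the first arm irrespective of whether the first Durfee rectangle is a square ($r\le k-a$) or a $(n_1+1)\times n_1$ rectangle ($r>k-a$, including the entirety of the $a=k$ case), noting that in the non-square case the bottom row of the rectangle has exactly $n_1$ parts and so contributes nothing to the arm. A secondary but delicate check is the term-by-term verification of the $A$ expansion when $a=k$, to confirm that each two-part partition $(\lambda_1,1)$ receives the exact weight $x^j$ prescribed in case~(2).
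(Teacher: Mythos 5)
Your proposal is correct and follows essentially the same route as the paper: Theorem~\ref{AGbig} on the product side, Andrews' Durfee dissection together with the Greene rewriting to move the $1/(1-q)$ factor from $(q)_{n_r}$ to $(q)_{n_1}$, interpretation of that factor as counting length-one columns of the first arm (giving $\lambda_1-\lambda_2$ for $n_1\ge 2$ and $\lambda_1-n_1$ for $n_1=1$), and direct expansion of the initial term $1+A$ for the $r=n_1=1$ cases. Your treatment is in fact more explicit than the paper's, which presents the theorem as a ``restatement'' of Theorem~\ref{AGbig} after exactly this discussion; the two checks you flag (the non-square first rectangle when $a=k$, and the $x$-weight redistribution at sizes $Mj$ and $Mj+M$) are genuine points the paper glosses over, and you resolve them correctly.
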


\begin{table}[ht]
\centering
\caption{Theorem~\ref{AGcombM} when $a=2, k=3, M=3$}
\label{my-label}
\begin{tabular}{ccccl}
partition of 10 without 2,7,9 &\# of 3's& (4,2)-admissible partition of 10& value of $j$&\\
$ 10$& $0$ &$ 10 $&$3$&  \\
$811$& $0$&$61111$&$1$& \\
$64$&$0$&$421111$&$0$&  \\
$631$&$1$ &$322111$&$0 $& \\
$61111$&$0$ &$331111$&$0$&  \\
$55$&$0$&$811$ &$2$&  \\
$541$&$0$&$6211$&$1$&  \\
$5311$&$1$&$5311 $&$0$& \\
$511111$&$0$&$5221$&$1$&  \\
$4411$&$0$&$22222$&$0$&  \\
$433$&$2$&$4411$&$0$&  \\
$43111$&$1$&$4321 $&$0$& \\
$4111111$&$0$&$82$&$2$&  \\
$3331$&$3$&$73$&$1$&  \\
$331111$&$2$&$64$&$0$&  \\
$31111111$&$1$&$55$&$0$& \\
$ 1111111111$&$0$&$433$&$0$&  \\
\end{tabular}
\end{table}

\section{Shifting a part}

The weighted versions allow one to shift a part. For example in first 
Rogers-Ramanujan identity, what happens if 
parts of size $11$ are replaced by parts of size $28$? All we need to do is to choose 
$M=11$ and $x=q^{17}$ in Theorem~\ref{RR1x}.

\begin{cor} 
\label{shiftcomb}
Let $M$ be a positive integer which is congruent to $1$ or $4$ modulo $5.$ Let 
$N>M$ be an integer not congruent to $1$ or $4$ modulo $5.$  
The number of partitions of $n$ into parts congruent to $1$ or $4$ modulo $5$, except $M$, 
or parts of size $N$, is equal to the 
number of partitions $\lambda$ of $n$ with difference at least 2, such that
\begin{enumerate}
\item $\lambda$ has a single part, which is congruent to $0,1,\cdots, {\text{ or }}M-1\mod N,$
\item $\lambda$ has at least two parts, and $\lambda_1-\lambda_2-2$ is 
congruent to $0,1,\cdots, {\text{ or }}M-1\mod N.$
\end{enumerate}
\end{cor}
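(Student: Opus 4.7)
The plan is to deduce Corollary~\ref{shiftcomb} directly from Theorem~\ref{RR1x} by the specialization $x = q^{N-M}$, which turns $xq^M$ into $q^N$ and converts an analytic weighting into a combinatorial shift. First I verify that the left-hand side of Theorem~\ref{RR1x} under this substitution becomes
$$
\frac{1-q^M}{1-q^N}\,\frac{1}{\prod_{k\ge 0}(1-q^{5k+1})(1-q^{5k+4})},
$$
and that this is the generating function for the partitions described on the left side of the corollary: the factor $(1-q^M)$ removes parts of size $M$ from the Rogers--Ramanujan product (using $M\equiv 1,4\pmod 5$), and the factor $1/(1-q^N)$ inserts arbitrarily many parts of size $N$ (using $N\not\equiv 1,4\pmod 5$, so no double-counting occurs).

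Next I reinterpret the right-hand side term by term after substituting $x=q^{N-M}$. The constant $1$ matches the empty partition. For the single-part contribution I expand
$$
q\,\frac{1+q+\cdots+q^{M-2}+q^{N-1}}{1-q^N}
=\sum_{j\ge 0}\bigl(q^{jN+1}+q^{jN+2}+\cdots+q^{jN+M-1}+q^{(j+1)N}\bigr),
$$
and observe that the resulting exponents are exactly the positive integers $n$ with $n\bmod N\in\{0,1,\dots,M-1\}$; this matches case (1). For $k\ge 2$, the factor $q^{k^2}/(q^2;q)_{k-1}$ already encodes partitions with difference at least $2$, $k$ parts, and first-two-part gap equal to $2$, as reviewed just before Theorem~\ref{RRxcomb1}. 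The remaining factor becomes
$$
\frac{[M]_q}{1-q^N}=\sum_{j\ge 0}\sum_{r=0}^{M-1} q^{jN+r},
$$
and the exponent $jN+r$ is absorbed into the gap $\lambda_1-\lambda_2-2$. Thus this term enumerates partitions with at least two parts, difference $\ge 2$, and $\lambda_1-\lambda_2-2\equiv r\pmod N$ for some $0\le r\le M-1$, which is case (2).

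The main obstacle is bookkeeping the single-part term, because the numerator $1+q+\cdots+q^{M-2}+q^{N-1}$ contains the outlier $q^{N-1}$ rather than $q^{M-1}$ from the expected geometric sum; I need to verify that the residues collected over all $j\ge 0$ form exactly $\{0,1,\dots,M-1\}\pmod N$, including $0$ (contributed by $q^{(j+1)N}$) and excluding $M-1,\dots,N-2$ (which are not realized). After that check, matching the three right-hand-side components with the three descriptions in cases (1)--(2) (empty partition, single part, at least two parts) finishes the proof.
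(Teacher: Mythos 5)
Your proposal is correct and is exactly the route the paper intends: the paper's only indication of proof is the remark that one should ``choose $M=11$ and $x=q^{17}$ in Theorem~\ref{RR1x}'' for the case $N=28$, i.e.\ precisely your specialization $x=q^{N-M}$, with the same term-by-term reinterpretation of both sides. Your extra bookkeeping of the single-part numerator $1+q+\cdots+q^{M-2}+q^{N-1}$ and the use of $N>M$ to keep the residues $\{0,1,\dots,M-1\}$ distinct modulo $N$ is sound and fills in details the paper leaves implicit.
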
 

\begin{example} Let $N=8$, $M=4$, and $n=9.$ The 
equinumerous sets of partitions for Corollary~\ref{shiftcomb} are
$$
\{(9),(6,1,1,1),(8,1), (1^9)\}\leftrightarrow
\{(9),(6,3), (7,2), (5,3,1)\}.
$$
\end{example}

An related example occurs when two parts are shifted:  $1$ and $4$ are replaced
by $2$ and $3$. The appropriate identity is
\begin{equation}
\begin{aligned}
&\frac{1}{(1-q^2)(1-q^3)(q^6;q)_\infty (q^9;q)_\infty}\\
=& 1+\frac{q^2(1+q)}{1-q^3}+\sum_{k=2}^\infty
\frac{q^{k^2}}{(q^2;q)_{k-1}}\frac{1+q^2}{1-q^3}.
\end{aligned}
\end{equation}

\begin{theorem} 
\label{weirdshift}
The number of partitions of $n$ into parts from 
$$
\{2,3,5k+1,5k+4: k\ge 1\}
$$ 
is equal to the number of partitions $\lambda$ of $n$ with   
difference at least $2$ and
\begin{enumerate}
\item if $\lambda$ has a single part, then $n\not\equiv 1\mod 3$,
\item if $\lambda$ has at least two parts, then ($\lambda_1-\lambda_2-2$) 
$\not\equiv 1\mod 3.$  
\end{enumerate}
\end{theorem}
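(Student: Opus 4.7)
The plan is to deduce the displayed analytic identity from the first Rogers-Ramanujan identity by multiplying through by a correction factor, and then to reinterpret the result using the same mechanism as in the proof of Theorem~\ref{RRxcomb1}.

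Starting from \eqref{RR1}, I would multiply both sides by
$$
\frac{(1-q)(1-q^4)}{(1-q^2)(1-q^3)}=\frac{(1-q)(1+q^2)}{1-q^3}.
$$
On the left, this replaces the factors $(1-q)^{-1}(1-q^4)^{-1}$ by $(1-q^2)^{-1}(1-q^3)^{-1}$, producing the generating function for partitions with parts in $\{2,3,5k+1,5k+4:k\ge 1\}$, which is the stated left-hand side. On the right, for every $k\ge 1$ the factor $1-q$ cancels the leading factor of $(q;q)_k$ to leave $(q^2;q)_{k-1}$, so the $k$-th summand becomes
$$
\frac{q^{k^2}(1+q^2)}{(1-q^3)(q^2;q)_{k-1}},
$$
already matching the sum in the theorem for $k\ge 2$. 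It remains to combine the $k=0$ and $k=1$ contributions; a short computation gives
$$
\frac{(1-q)(1+q^2)}{1-q^3}+\frac{q(1+q^2)}{1-q^3}=\frac{1+q^2}{1-q^3}=1+\frac{q^2(1+q)}{1-q^3},
$$
which is precisely the asymmetric leading pair appearing in the stated identity.

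For the combinatorial claim, the key expansions are
$$
\frac{1+q^2}{1-q^3}=\sum_{\substack{p\ge 0\\ p\not\equiv 1\,(\mathrm{mod}\,3)}}q^p,\qquad
\frac{q^2(1+q)}{1-q^3}=\sum_{\substack{n\ge 2\\ n\not\equiv 1\,(\mathrm{mod}\,3)}}q^n.
$$
As recalled in the proof of Theorem~\ref{RRxcomb1}, the factor $1/(1-q)$ in the $k$-th Rogers-Ramanujan summand is the generating function for the number of $1$'s in $\lambda^*$, which for $\lambda$ with at least two parts equals $\lambda_1-\lambda_2-2$. Replacing $1/(1-q)$ by $(1+q^2)/(1-q^3)$ therefore restricts partitions of difference at least two to those with $\lambda_1-\lambda_2-2\not\equiv 1\pmod 3$, giving clause (2). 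The second expansion above yields clause (1) for single-part partitions, while the leading $1$ accounts for the empty partition.

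The main obstacle I anticipate is organizational rather than analytic: one has to verify that the $k=0$ and $k=1$ pieces collapse cleanly to $1+q^2(1+q)/(1-q^3)$, and that the boundary cases (empty partition versus single part) separate correctly. Once this is in place, the general reinterpretation $1/(1-q)\mapsto(1+q^2)/(1-q^3)$ follows immediately from the template already set up for Theorem~\ref{RRxcomb1}.
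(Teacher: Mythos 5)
Your proposal is correct and takes essentially the same route as the paper, which records the analytic identity obtained by multiplying the first Rogers--Ramanujan identity by $(1-q)(1-q^4)/\bigl((1-q^2)(1-q^3)\bigr)$ and then reads off the combinatorics from the standard interpretation of $1/(1-q)$ as counting the $1$'s in $\lambda^*$, i.e.\ $\lambda_1-\lambda_2-2$. Your collapsing of the $k=0$ and $k=1$ terms to $1+q^2(1+q)/(1-q^3)$ and the two mod-$3$ expansions supply exactly the details the paper leaves implicit.
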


\begin{example} Let $n=13$. The two equinumerous sets of partitions in 
Theorem~\ref{weirdshift} are
$$
\begin{aligned}
&\{(11,2), (9,2,2), (6,3,2,2), (3,2,2,2,2,2), (3,3,3,2,2)\}\\ 
&\leftrightarrow 
\{(12,1), (10,3), (9,4), (8,4,1), (7,5,1)\}.
\end{aligned}
$$
The possible partitions with difference at least $2$ 
$$\{(13), (11,2), (8,5), (9,3,1), (7,4,2)\}$$ are disallowed. 
\end{example}

\begin{cor} 
\label{weirdshift2}
Let $M$ be an odd positive integer. Let 
$N>M$ be an even integer.  
The number of partitions of $n$ into odd parts except $M$, 
or parts of size $N$, is equal to the 
number of partitions $\lambda$ of $n$ into distinct parts, such that
\begin{enumerate}
\item $\lambda$ has a single part, which is congruent to $0,1,\cdots, {\text{ or }}M-1\mod N,$
\item $\lambda$ has at least two parts, and $\lambda_1-\lambda_2-1$ is 
congruent to $0,1,\cdots, {\text{ or }}M-1\mod N.$
\end{enumerate}
\end{cor}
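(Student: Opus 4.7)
The plan is to specialize Corollary~\ref{RR1cor} at $w=q^{N-M}$ and interpret the resulting identity combinatorially, mirroring the proof of Theorem~\ref{combthm2}. Since $M$ is odd and $N>M$ is even, $N-M$ is a positive (odd) integer, and the substitution converts the factor $1-wq^M$ into $1-q^N$. The left side of Corollary~\ref{RR1cor} becomes
$$
\frac{1}{(1-q)(1-q^3)\cdots(1-q^{M-2})(1-q^N)(1-q^{M+2})\cdots},
$$
which is exactly the generating function for partitions of $n$ whose parts are odd except for $M$ but with parts of size $N$ also allowed; this matches the left-hand count in the corollary.

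For the right side I would handle the three pieces separately. The constant term $1$ corresponds to the empty partition. The $j=1$ contribution becomes
$$
\frac{q+q^2+\cdots+q^{M-1}+q^N}{1-q^N},
$$
and expanding $1/(1-q^N)=\sum_{i\ge 0}q^{iN}$ shows this enumerates positive integers $n$ with $n\bmod N\in\{0,1,\ldots,M-1\}$: the summand $q^N/(1-q^N)$ supplies residue $0$ (with $n\ge N$), while the remainder supplies residues $1,\ldots,M-1$. These are exactly the single-part partitions in case (1).

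For each $j\ge 2$, the factor $q^{\binom{j+1}{2}}/(q^2;q)_{j-1}$ plays the same role as in the proof of Theorem~\ref{combthm2}: it is the generating function for partitions with $j$ distinct parts in which the factor $1/(1-q)$ tracking the statistic $\lambda_1-\lambda_2-1$ has been stripped off. The accompanying factor
$$
\frac{[M]_q}{1-q^N}=\sum_{\substack{m\ge 0\\ m\bmod N\in\{0,\ldots,M-1\}}}q^m
$$
reinserts this statistic and restricts its residue mod $N$ to $\{0,1,\ldots,M-1\}$, producing precisely the distinct-part partitions with $j\ge 2$ parts described in case (2).

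The main (essentially only) point requiring care is the asymmetric $j=1$ numerator, which carries $wq^M$ in place of a would-be constant $1$: after setting $w=q^{N-M}$ one must verify that the two resulting pieces $(q+\cdots+q^{M-1})/(1-q^N)$ and $q^N/(1-q^N)$ recombine to the correct set of single-part values. Once this small bookkeeping step is in hand, the corollary follows by matching the three pieces on the right (empty, $j=1$, $j\ge 2$) with the three cases in the statement.
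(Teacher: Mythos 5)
Your proof is correct and is essentially the paper's intended argument: Section 5 obtains all of its shifting results by the specialization $w=q^{N-M}$ (turning $1-wq^M$ into $1-q^N$) in the corresponding marked identity, here Corollary~\ref{RR1cor}, and then reads off the combinatorics exactly as in Theorem~\ref{combthm2}. Your bookkeeping of the $j=1$ term and of $[M]_q/(1-q^N)$ as the indicator of residues $0,1,\ldots,M-1 \bmod N$ matches what the paper leaves implicit.
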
 

\begin{example} If $N=8$, $M=3$, and $n=9$ the equinumerous sets 
in Corollary~\ref{weirdshift2} are
$$
\{(9), (8,1), (7,1,1), (5,1^4), (1^9)\}\leftrightarrow
\{(9), (5,4), (6,3), (5,3,1), (4,3,2)\}.
$$
\end{example}

\section{Marking a sum of parts}

One may ask if Theorems~\ref{RR1x} and ~\ref{RR2x} have combinatorial interpretations 
without the modular conditions on $M$. The sum sides retain 
the interpretations given by Theorems~\ref{RRxcomb1} and \ref{RRx2comb1} and are positive 
as a power series in $q$ and $w$. It remains to understand what the product side 
represents as a generating function of partitions. 
We give in Proposition~\ref{weirdprop} a general positive 
combinatorial expansion for the product side. 
We call this ``marking a sum of parts".

As an example suppose that $M=A+B,$ is a sum of two parts, where $A$ and $B$ 
are distinct integers congruent to $1$ or $4\mod 5.$ The quotient in 
the product side of Theorem~\ref{RR1x}
$$
\frac{1-q^{A+B}}{1-wq^{A+B}}\frac{1}{(1-q^A)(1-q^B)}=
\frac{1}{(1-q^B)(1-wq^{A+B})}+\frac{q^A}{(1-q^A)(1-wq^{A+B})}
$$   
is a generating function for partitions with parts $A$ or $B$. The first term allows
the number of $B$'s to be at least as many as the number of $A$'s. The second term
allows the number of $A$'s to be greater than the number of $B$'s. 
The exponent of $w$ is the number of times a pair $AB$ appears in a partition. 
For example, if $A=6$, $B=4$, the partition $(6,6,4,4,4,4)$ contains 
$64$ twice, along with two $4$'s. We have found a prototypical result.

\begin{prop} 
\label{proto}
Let $M=A+B$ for some $A,B\equiv 1,4 \mod 5, A\neq B.$ Then
$$
\frac{1-q^M}{1-wq^M} \frac{1}{(q;q^5)_\infty (q^4;q^5)_\infty}
$$
is the generating function for all partitions $\mu$ with parts $\equiv 1,4\mod 5$ 
by the number of occurrences of the pair $AB$
\end{prop}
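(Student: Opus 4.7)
The plan is to start from the partial fraction identity displayed immediately before the proposition,
$$
\frac{1-q^{A+B}}{(1-q^A)(1-q^B)(1-wq^{A+B})}
= \frac{1}{(1-q^B)(1-wq^{A+B})} + \frac{q^A}{(1-q^A)(1-wq^{A+B})},
$$
which is routine to verify by clearing denominators. The remaining Euler product
$$
\prod_{\substack{k\ge 0 \\ 5k+1 \notin \{A,B\}}} \frac{1}{1-q^{5k+1}} \cdot \prod_{\substack{k\ge 0 \\ 5k+4 \notin \{A,B\}}} \frac{1}{1-q^{5k+4}}
$$
is independent of $w$ and simply contributes arbitrary partitions in parts $\equiv 1, 4 \bmod 5$ other than $A$ and $B$. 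The problem therefore reduces to interpreting each of the two terms above as a generating function for partitions using only the parts $A$ and $B$, weighted by $w$ according to the number of $AB$ pairs present.

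Next I would carry out this combinatorial reading termwise. Expanding
$$
\frac{1}{(1-q^B)(1-wq^{A+B})} = \sum_{j,b \ge 0} w^{b}\, q^{jB + b(A+B)}
$$
encodes a partition with $b$ copies of $A$ and $j+b$ copies of $B$, carrying weight $w^b$; here $b = \#A = \min(\#A,\#B)$ because $\#A \le \#B$, and this equals the number of $AB$ pairs. Similarly, expanding
$$
\frac{q^A}{(1-q^A)(1-wq^{A+B})} = \sum_{i\ge 1,\, b \ge 0} w^{b}\, q^{iA + b(A+B)}
$$
encodes a partition with $i+b$ copies of $A$ and $b$ copies of $B$, so $\#A > \#B$ and the number of $AB$ pairs is $b = \#B = \min(\#A,\#B)$. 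The two cases $\#A \le \#B$ and $\#A > \#B$ are exhaustive and disjoint, so every $(A,B)$-partition is counted exactly once with the correct power of $w$.

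Combining these termwise interpretations with the $w$-independent Euler product yields the claimed generating function identity. The argument presents no real technical obstacle; the main things to watch are that the partial fraction decomposition honestly splits $(A,B)$-partitions into the two cases $\#A \le \#B$ and $\#A > \#B$ (which is exactly what the two series above display), and that the hypothesis $A \ne B$ ensures the block $AB$ is unambiguously a pair of distinct parts rather than a repeated part of one size. Note also that $A + B$ is never itself $\equiv 1,4 \bmod 5$, so there is no risk of collision between the marked block $AB$ and a legitimate part of size $A+B$ in the Euler product.
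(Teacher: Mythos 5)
Your proposal is correct and follows essentially the same route as the paper: the partial fraction splitting into the cases $\#A\le\#B$ and $\#A>\#B$ is exactly the decomposition the authors display just before the proposition, and your termwise expansion makes their informal reading precise. (The paper's general Proposition 6.4 re-derives this via a telescoping sum, but for two parts it reduces to the same identity.)
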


A more general statement holds for partitions other than $M=A+B.$  
To state this result, we need to define an analogue of the multiplicity of a single 
part to a multiplicity of a partition.  We again use the multiplicity notation for a partition, for example
$(7^3,4^1, 2^3)$ denotes the partition $(7,7,7,4,2,2,2).$

\begin{defn} 
Let $\lambda=(A_1^{m_1}, \cdots, A_k^{m_k})$ be a partition. We say 
$\lambda$ is inside $\mu$ $k$ times, $k=E_\lambda(\mu),$ if 
$$
k=max\{j: j\ge 0, \mu {\text{ contains at least }} jm_s 
{\text{ parts of size }}A_s {\text{ for all }}s\}.
$$ 
\end{defn}

\begin{example} Let $\lambda=(6^1,4^2,1^1)$, $\mu=(9^1,6^7, 4^5, 1^8).$ Then
$E_\lambda(\mu)=2$ but not $3$ because $\mu$ contains only five $4$'s.  
\end{example}

With this definition, Proposition~\ref{proto} holds for any partition.
\begin{prop} 
\label{weirdprop}
Let $\lambda\vdash M$ be a fixed partition into parts  
congruent to $1$ or $4\mod 5.$ Then 
$$
\frac{1-q^M}{1-wq^M} \frac{1}{(q;q^5)_\infty (q^4;q^5)_\infty}
$$
is the generating function for all partitions $\mu$ into parts congruent to 
$1$ or $4\mod 5,$
$$
\sum_{\mu} q^{||\mu||} w^{E_\lambda(\mu)},
$$
where $E_\lambda(\mu)$ is the number of times $\lambda$ appears in $\mu.$ 
\end{prop}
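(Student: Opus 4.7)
The plan is to reduce the identity to a finite product statement and then evaluate it by stratifying over the value of $E_\lambda(\mu)$. Write $\lambda=(A_1^{m_1},\ldots,A_k^{m_k})$ with the $A_i$ distinct, so $M=\sum_i A_i m_i$. Every part size $B\equiv 1,4\pmod 5$ not in $\{A_1,\ldots,A_k\}$ affects neither $E_\lambda(\mu)$ nor the marking factor $(1-q^M)/(1-wq^M)$, and contributes the same factor $1/(1-q^B)$ to both sides. Cancelling these common factors reduces the proposition to the finite identity
\begin{equation*}
\frac{1-q^M}{1-wq^M}\prod_{i=1}^k\frac{1}{1-q^{A_i}} \;=\; \sum_{n_1,\ldots,n_k\ge 0} q^{A_1n_1+\cdots+A_kn_k}\,w^{\min_i\lfloor n_i/m_i\rfloor},
\end{equation*}
where $n_i$ is the number of parts of size $A_i$ in $\mu$.

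Next I would verify this by grouping the right-hand sum according to the value $j=\min_i\lfloor n_i/m_i\rfloor$. Writing $n_i=jm_i+s_i$, the condition that this minimum equals $j$ is equivalent to $s_i\ge 0$ for every $i$, together with the failure of $s_i\ge m_i$ for every $i$. The contribution at level $j$ therefore equals
\begin{equation*}
q^{jM}\left(\prod_{i=1}^k\frac{1}{1-q^{A_i}} - \prod_{i=1}^k\frac{q^{A_im_i}}{1-q^{A_i}}\right) \;=\; q^{jM}(1-q^M)\prod_{i=1}^k\frac{1}{1-q^{A_i}},
\end{equation*}
using $\sum_i A_im_i=M$ to pull out the $q^M$ factor. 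Summing $w^j q^{jM}(1-q^M)$ over $j\ge 0$ collapses as a geometric series to $(1-q^M)/(1-wq^M)$, producing exactly the left-hand side.

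The only delicate step is the two-term inclusion--exclusion in the displayed equation, which amounts to the definitional unpacking of $E_\lambda(\mu)=\min_i\lfloor n_i/m_i\rfloor$ as a minimum of independent floor statistics, one per distinct part size appearing in $\lambda$. Once the sum factorises over the sizes $A_1,\ldots,A_k$, no deeper combinatorial tool is needed; the main obstacle is simply to recognise that the weight $(1-q^M)/(1-wq^M)$ arises exactly as the telescoping difference of the unrestricted geometric series and its $m_i$-shifted counterpart. Note that the argument uses the residue condition $A_i\equiv 1,4\pmod 5$ only to identify the unmarked part sizes, so the same proof gives analogous expansions for any product side defined by congruence conditions on parts.
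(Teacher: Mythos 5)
Your proof is correct. The paper proves this via the more general Proposition~\ref{weirdprop2}, and its route differs from yours in one substantive place: after isolating the part sizes occurring in $\lambda$ (as you do), the paper expands $1-q^M$ by the telescoping identity $1-q^M=\sum_{i}q^{m_1B_1+\cdots+m_{i-1}B_{i-1}}(1-q^{m_iB_i})$, which decomposes the partitions with $E_\lambda(\mu)=0$ into $k$ \emph{disjoint} classes according to the first part size of $\lambda$ whose multiplicity in $\mu$ falls short; it then restores all higher levels at once by the factor $(1-wq^M)^{-1}$. You instead stratify directly by the level $j=E_\lambda(\mu)=\min_i\lfloor n_i/m_i\rfloor$ and compute each stratum by the two-term complementation ``all $s_i\ge 0$'' minus ``all $s_i\ge m_i$,'' which produces the factor $q^{jM}(1-q^M)$ without any telescoping, and then sum the geometric series in $wq^M$. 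Your version is slightly more economical (a single subtraction rather than a $k$-term disjoint decomposition), while the paper's telescoping has the minor advantage of exhibiting the $E_\lambda(\mu)=0$ partitions as an explicit disjoint union, which is closer to a bijective statement. Both arguments rest on the same identification $E_\lambda(\mu)=\min_i\lfloor n_i/m_i\rfloor$ and both correctly observe that part sizes not occurring in $\lambda$ factor out harmlessly, so your closing remark that the congruence condition is irrelevant matches the paper's own Proposition~\ref{weirdprop2}.
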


The modular condition on the parts in Proposition~\ref{weirdprop} is irrelevant. 

\begin{prop} 
\label{weirdprop2}
Let $\mathbb{A}=\{A_1,A_2,\cdots \}$ be any set of positive integers. Suppose that 
$\lambda=(B_1^{m_1},\cdots, B_k^{m_k})$ is a partition whose parts come from $\mathbb{A}$
and $M=\sum_{i=1}^k m_i B_i.$ Then
$$
\frac{1-q^M}{1-wq^M}\prod_{i=1}^\infty (1-q^{A_i})^{-1}
$$
is the generating function for all partitions $\mu$ with parts from $\mathbb{A}$ 
$$
\sum_{\mu} q^{||\mu||} w^{E_\lambda(\mu)}.
$$
\end{prop}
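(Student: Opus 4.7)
The plan is to establish a simple bijective decomposition of partitions with parts in $\mathbb{A}$ that peels off a maximal number of copies of $\lambda$.

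First I would define, for any partition $\mu$ with parts in $\mathbb{A}$, the pair $(\mu', j)$ where $j = E_\lambda(\mu)$ and $\mu' = \mu - j\lambda$ (subtracting $j$ copies of each part of $\lambda$ with appropriate multiplicity). By the definition of $E_\lambda$, this subtraction is legal: for every $s$, the multiplicity of $B_s$ in $\mu$ is at least $jm_s$, so the multiplicity of $B_s$ in $\mu'$ is a nonnegative integer. The map $(\mu', j) \mapsto \mu' + j\lambda$ is the obvious inverse on the appropriate domain, so the real content is to identify the image: namely that $(\mu', j)$ ranges over pairs where $j \geq 0$ and $\mu'$ is a partition with parts in $\mathbb{A}$ that is ``$\lambda$-free,'' meaning $E_\lambda(\mu') = 0$.

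The key verification is that $E_\lambda(\mu' + j\lambda) = j$ whenever $E_\lambda(\mu') = 0$. One direction is immediate: $\mu' + j\lambda$ contains at least $jm_s$ copies of $B_s$ for each $s$, so $E_\lambda \geq j$. For the reverse inequality, $E_\lambda(\mu') = 0$ means there exists some index $s^*$ such that $\mu'$ contains fewer than $m_{s^*}$ copies of $B_{s^*}$; then $\mu' + j\lambda$ contains strictly fewer than $(j+1)m_{s^*}$ copies of $B_{s^*}$, so $E_\lambda \leq j$. This is the most delicate point, but it is really just unwinding the definition.

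Once the decomposition is established, the generating function calculation is immediate. By splitting the sum over $\mu$ into a sum over pairs $(\mu', j)$,
$$
\sum_{\mu} q^{|\mu|} w^{E_\lambda(\mu)} = \left(\sum_{\mu': E_\lambda(\mu')=0} q^{|\mu'|}\right) \cdot \sum_{j=0}^\infty (wq^M)^j = \frac{1}{1-wq^M} \sum_{\mu': E_\lambda(\mu')=0} q^{|\mu'|}.
$$
To evaluate the remaining sum, I would specialize $w = 1$ in the identity above (equivalently, apply the same decomposition with all weights being $1$). This yields
$$
\prod_{i=1}^\infty \frac{1}{1-q^{A_i}} = \frac{1}{1-q^M} \sum_{\mu': E_\lambda(\mu')=0} q^{|\mu'|},
$$
so $\sum_{\mu': E_\lambda(\mu')=0} q^{|\mu'|} = (1-q^M) \prod_{i=1}^\infty (1-q^{A_i})^{-1}$. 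Substituting back gives exactly the stated formula. The main (and only) obstacle is the bijective bookkeeping for $E_\lambda$; after that, the proof is automatic and bypasses any need for the modular conditions present in Proposition~\ref{weirdprop}.
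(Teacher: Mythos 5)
Your proof is correct, and it takes a genuinely different route from the paper's at the key step. Both arguments rest on the same underlying decomposition $\mu \leftrightarrow (\mu',j)$ with $j=E_\lambda(\mu)$ and $\mu'$ ``$\lambda$-free'' (the paper phrases this as ``adding back the multiples of $\lambda$ by multiplying by $(1-wq^M)^{-1}$''), and your careful verification that $E_\lambda(\mu'+j\lambda)=j$ exactly when $E_\lambda(\mu')=0$ is the right thing to check. Where you diverge is in evaluating the generating function of the $\lambda$-free partitions. The paper computes it directly: it telescopes $1-q^M$ as $\sum_i q^{m_1B_1+\cdots+m_{i-1}B_{i-1}}(1-q^{m_iB_i})$ and identifies the $i$-th term of the resulting expansion of $(1-q^M)\prod_{j}(1-q^{B_j})^{-1}$ with the partitions satisfying $n_1\ge m_1,\dots,n_{i-1}\ge m_{i-1}$, $n_i<m_i$, a disjoint cover of the $\lambda$-free partitions. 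You instead specialize $w=1$ in your own identity to solve for the $\lambda$-free generating function, which is legitimate since each coefficient of $q^n$ is a polynomial in $w$. Your route is shorter and avoids the telescoping bookkeeping entirely; the paper's route buys an explicit combinatorial stratification of the $\lambda$-free partitions (by the first index $i$ at which the multiplicity of $B_i$ drops below $m_i$), which is of some independent interest. The only point you elide is the mirror-image check that $E_\lambda\bigl(\mu-E_\lambda(\mu)\lambda\bigr)=0$ for arbitrary $\mu$, but this follows from the same maximality argument you already give, so it is not a gap.
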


\begin{proof}
We start with the telescoping sum
$$
1-q^M= 1-q^{m_1B_1}+q^{m_1B_1}(1-q^{m_2B_2})+\cdots +q^{\sum_{i=1}^{k-1}m_iB_i}(1-q^{m_kB_k}).
$$
which implies
\begin{equation}
\label{IEeq}
\begin{aligned}
(1&-q^M)\prod_{i=1}^k (1-q^{B_i})^{-1}\\
=&\sum_{i=1}^k q^{m_1B_1+\cdots m_{i-1}B_{i-1}}
\prod_{j=1}^{i-1} (1-q^{B_j})^{-1} \frac{1-q^{m_iB_i}}{1-q^{B_i}}
\prod_{j=i+1}^{k} (1-q^{B_j})^{-1}.
\end{aligned}
\end{equation}

We see that \eqref{IEeq} is the generating function for partitions $\mu$ 
with parts from $\{B_1,B_2,\cdots, B_k\}$ such that $E_\lambda(\mu)=0.$  
The $i^{th}$ term of the sum represents partitions 
$\mu=(B_1^{n_1}, B_2^{n_2},\cdots, B_k^{n_k})$ 
$$
n_1\ge m_1, \  n_2\ge m_2, \cdots, n_{i-1}\ge m_{i-1}, \ n_i< m_i.
$$ 
These disjoint sets cover all $\mu$ with $E_\lambda(\mu)=0.$

Adding back the multiples of $\lambda$ by multiplying by $(1-wq^M)^{-1}$, 
and also the unused parts from $\mathbb{A}$, gives the result.
\end{proof}

\begin{defn} Let $\mathbb{A}$ be a set of parts. If $\lambda$ has 
parts from $\mathbb{A},$ let 
$E_\lambda^{\mathbb{A}}(n,k)$ be the number of partitions $\mu$ of $n$ 
with parts from $\mathbb{A}$ such that $E_\lambda(\mu)=k.$
\end{defn}

\begin{cor} 
\label{oddcor} For any set of part sizes $\mathbb{A}$, 
let $\lambda_1$ and $\lambda_2$ be two partitions of $M$ into parts 
from $\mathbb{A}.$ Then for all $n,k\ge 0$ 
$$
E_{\lambda_1}^{\mathbb{A}}(n,k)=E_{\lambda_2}^{\mathbb{A}}(n,k).
$$
\end{cor}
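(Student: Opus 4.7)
The plan is to extract Corollary~\ref{oddcor} directly from Proposition~\ref{weirdprop2}. The key observation is that the generating function
$$\frac{1-q^M}{1-wq^M}\prod_{i=1}^\infty (1-q^{A_i})^{-1}$$
produced by Proposition~\ref{weirdprop2} depends only on the set $\mathbb{A}$ and on the integer $M$, and not on which particular partition $\lambda$ of $M$ (with parts from $\mathbb{A}$) is being tracked.

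First, I would unpack the definition of $E_\lambda^{\mathbb{A}}(n,k)$ to recognize that the bivariate generating function
$$\sum_{n,k\ge 0} E_\lambda^{\mathbb{A}}(n,k)\, q^n w^k = \sum_{\mu} q^{||\mu||} w^{E_\lambda(\mu)},$$
where the sum on the right is over all partitions $\mu$ with parts from $\mathbb{A}$, is exactly the quantity identified by Proposition~\ref{weirdprop2}. Then, applying Proposition~\ref{weirdprop2} once with $\lambda = \lambda_1$ and once with $\lambda = \lambda_2$, both bivariate generating functions equal the same product
$$\frac{1-q^M}{1-wq^M}\prod_{i=1}^\infty (1-q^{A_i})^{-1},$$
since $|\lambda_1| = |\lambda_2| = M$ and the product factor is independent of the chosen partition. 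Extracting the coefficient of $q^n w^k$ yields $E_{\lambda_1}^{\mathbb{A}}(n,k) = E_{\lambda_2}^{\mathbb{A}}(n,k)$.

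There is essentially no obstacle beyond verifying that Proposition~\ref{weirdprop2} truly applies to both $\lambda_1$ and $\lambda_2$ with the same ambient set $\mathbb{A}$: the parts of $\lambda_1$ and of $\lambda_2$ may differ, but both are subsets of $\mathbb{A}$, so the hypothesis of the proposition is met in each case, and the product side, which ranges over \emph{all} $A_i \in \mathbb{A}$, is identical on both applications. The only subtlety worth flagging in the written proof is that $E_\lambda(\mu)$ in principle involves only those $A_i$ appearing in $\lambda$, yet the generating function still sums over all partitions $\mu$ with parts in $\mathbb{A}$; Proposition~\ref{weirdprop2} accounts for the ``unused parts from $\mathbb{A}$'' precisely in its final multiplication step, so no adjustment is needed.
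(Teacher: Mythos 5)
Your proposal is correct and is exactly the intended argument: Corollary~\ref{oddcor} follows immediately from Proposition~\ref{weirdprop2} because the generating function $\frac{1-q^M}{1-wq^M}\prod_{i=1}^\infty(1-q^{A_i})^{-1}$ depends only on $M$ and $\mathbb{A}$, not on the particular partition $\lambda$ of $M$, so the two bivariate generating functions coincide and their coefficients agree. The paper leaves this step implicit, and your write-up supplies precisely that reasoning with no gaps.
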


Here are the promised versions of Theorem~\ref{RRxcomb1} and Theorem~\ref{RRx2comb1} 
when $M$ does not satisfy the$\mod 5$ condition.

\begin{cor} 
\label{extendRRcomb}
Suppose that $\lambda$ is a partition of $M$ into parts 
congruent to $1$ or $4 \mod 5.$ Then Theorem~\ref{RRxcomb1} holds if the 
number of partitions having $M$ of multiplicity $k$ 
is replaced by $E_{\lambda}^{\mathbb{A}}(n,k)$, 
$\mathbb{A}=\{1,4,6,9, \cdots \}.$ Also, if $\lambda$ is a partition of $M$ into parts 
congruent to $2$ or $3 \mod 5,$ then Theorem~\ref{RRx2comb1} holds if the 
number of partitions having $M$ of multiplicity $k$  is replaced by 
$E_{\lambda}^{\mathbb{B}}(n,k)$, 
$\mathbb{B}=\{2,3,7,8, \cdots \}.$   
\end{cor}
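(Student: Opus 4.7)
The plan is to decouple the two sides of the marked Rogers--Ramanujan identity and reinterpret each one for arbitrary $M$. On the sum side, Proposition~\ref{mainprop} applied with $\alpha_j = q^{j^2}$ (for parts $\equiv 1,4\bmod 5$) or $\alpha_j = q^{j^2+j}$ (for parts $\equiv 2,3 \bmod 5$) furnishes an analytic identity valid for every positive integer $M$; no modular hypothesis on $M$ is used in proving Proposition~\ref{mainprop}. Consequently the right-hand sides of Theorems~\ref{RR1x} and \ref{RR2x} remain valid for arbitrary $M\ge 1$, as does the combinatorial interpretation of the sum side given in the proofs of Theorems~\ref{RRxcomb1} and \ref{RRx2comb1}: those arguments used only the formal substitution
$$
\frac{1}{1-q}\ \longrightarrow\ \frac{[M]_q}{1-wq^M}=\sum_{p\ge 0}q^p w^{\lfloor p/M\rfloor},
$$
and the modular condition on $M$ entered only when identifying the left-hand product with the classical Rogers--Ramanujan product $1/\bigl((q;q^5)_\infty(q^4;q^5)_\infty\bigr)$ or its companion.

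On the product side, I would invoke Proposition~\ref{weirdprop2} with $\mathbb{A}=\{1,4,6,9,\dots\}$ and any $\lambda\vdash M$ whose parts lie in $\mathbb{A}$. This yields
$$
\frac{1-q^M}{1-wq^M}\,\frac{1}{(q;q^5)_\infty(q^4;q^5)_\infty}
=\sum_{\mu}q^{|\mu|}\,w^{E_\lambda(\mu)},
$$
the sum running over partitions $\mu$ with parts in $\mathbb{A}$. Thus the original ``number of $M$'s'' statistic of Theorem~\ref{RRxcomb1} is replaced by the ``number of occurrences of $\lambda$'' statistic $E_\lambda(\mu)$. The identical procedure with $\mathbb{B}=\{2,3,7,8,\dots\}$ handles the Theorem~\ref{RRx2comb1} case.

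Equating the two sides and extracting the coefficient of $w^k q^n$ yields the corollary. The substantive work is already packaged in Propositions~\ref{mainprop} and \ref{weirdprop2}; the only conceptual point that needs attention is that the claimed equinumeration depends on $\lambda$ only through $M$. This is exactly the content of Corollary~\ref{oddcor}, which guarantees $E_{\lambda_1}^{\mathbb{A}}(n,k)=E_{\lambda_2}^{\mathbb{A}}(n,k)$ for any two partitions $\lambda_1,\lambda_2\vdash M$ in $\mathbb{A}$, so the statement is well posed regardless of which representative $\lambda$ is chosen. The main obstacle is therefore not technical but notational: one must verify that the sum-side proof of Theorems~\ref{RRxcomb1} and \ref{RRx2comb1} really never invokes the congruence class of $M$, so that cases (1) and (2) of those theorems transfer verbatim to arbitrary positive $M$.
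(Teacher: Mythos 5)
Your proposal is correct and follows essentially the same route the paper intends: the sum side of Theorem~\ref{RR1x}/\ref{RR2x} (via Proposition~\ref{mainprop}) and its combinatorial reading in Theorems~\ref{RRxcomb1}/\ref{RRx2comb1} never use the congruence class of $M$, while the product side for general $M$ is exactly what Proposition~\ref{weirdprop2} reinterprets as the generating function by the statistic $E_\lambda(\mu)$, with Corollary~\ref{oddcor} ensuring independence of the choice of $\lambda\vdash M$. Equating coefficients of $w^kq^n$ gives the corollary, just as in the paper.
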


\begin{example} Let $\lambda=(6,1)$, $M=7$, and $n=17.$ The 
equinumerous sets of partitions for Corollary~\ref{extendRRcomb} are
$$
\begin{aligned}
&\{ (9,6,1,1), (6,6,4,1), (6,4,4,1,1,1), (6,4,1^7), (6,1^{11})\}
\leftrightarrow \\
&\{ (16,1), (15,2), (14,3), (13,4), (13,3,1)\}.
\end{aligned}
$$
\end{example}

One corollary of the Rogers-Ramanujan identities is that there are more partitions 
of $n$ into parts congruent to $1$ or $4\mod 5$ than into parts 
congruent to $2$ or $3\mod 5.$ Kadell \cite{K} gave an injection which proves this, 
and Berkovich-Garvan \cite[Theorem 5.1]{BG} gave a stronger injection for modulo 8.
We can use Corollary~\ref{oddcor}, Theorem~\ref{RR1x}, and Theorem~\ref{RR2x} 
to generalize this fact.

\begin{theorem} Let
$$
\mathbb{A}=\{ 5k+1, 5k+4: k\ge 0\}, \qquad
\mathbb{B}=\{ 5k+2, 5k+3: k\ge 0\}. 
$$
Fix partitions $\lambda\vdash M$ and $\theta\vdash M$, $M\ge 3$, 
with parts from $\mathbb{A}$ and $\mathbb{B}$ respectively. 
Then for all $n,k\ge 0$
$$
E_{\theta}^{\mathbb{B}}(n,k)\le E_{\lambda}^{\mathbb{A}}(n,k).
$$ 
\end{theorem}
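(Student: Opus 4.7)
The plan is to convert the inequality into a coefficient non-negativity statement for a single power series difference, and verify it by comparing the sum-side expansions of the two marked Rogers-Ramanujan identities.

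First, by Corollary~\ref{oddcor} the values $E_\lambda^{\mathbb{A}}(n,k)$ and $E_\theta^{\mathbb{B}}(n,k)$ depend only on $n$, $k$, and $M=|\lambda|=|\theta|$, so I may choose $\lambda$ and $\theta$ arbitrarily. By Proposition~\ref{weirdprop2}, both generating functions have explicit product forms, and the inequality reduces to showing that
$$
D(q,w) := \frac{1-q^M}{1-wq^M}\Bigl(\frac{1}{(q;q^5)_\infty(q^4;q^5)_\infty}-\frac{1}{(q^2;q^5)_\infty(q^3;q^5)_\infty}\Bigr)
$$
has non-negative coefficients as a power series in $q$ and $w$.

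Second, I replace the two products by their sum-side expansions from Theorem~\ref{RR1x} and Theorem~\ref{RR2x}, both of which are instances of Proposition~\ref{mainprop} with $\alpha_j=q^{j^2}$ and $\alpha_j=q^{j^2+j}$ respectively. Subtracting term by term, I use $\alpha_j^{\mathbb{A}}-\alpha_j^{\mathbb{B}}=q^{j^2}(1-q^j)$ together with the simplification $(1-q^j)/(q^2;q)_{j-1}=1/(q^2;q)_{j-2}$; the $j=1$ terms telescope via $q(1-q)[M]_q=q-q^{M+1}$. Factoring out $[M]_q/(1-wq^M)$ yields
$$
D(q,w) = \frac{[M]_q}{1-wq^M}\,U(q),\qquad U(q):=q(1-q)+\sum_{j\ge 2}\frac{q^{j^2}}{(q^2;q)_{j-2}}.
$$

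The crux is verifying that $[M]_q\,U(q)$ has non-negative coefficients in $q$ when $M\ge 3$. The only negative contribution is a single $-q^{M+1}$ coming from $q[M]_q(1-q)=q-q^{M+1}$, and it is precisely cancelled by one term of $[M]_q\,q^4 = q^4+q^5+\cdots+q^{M+3}$ whenever the exponent $M+1$ lies in the range $\{4,\dots,M+3\}$, that is, whenever $M\ge 3$. The remaining terms $[M]_q\,q^{j^2}/(q^2;q)_{j-2}$ for $j\ge 3$ are products of series with non-negative coefficients. Multiplying by $1/(1-wq^M)=\sum_{k\ge 0}w^k q^{kM}$, which also has non-negative coefficients in $q$ and $w$, preserves non-negativity, giving $D(q,w)\ge 0$ coefficientwise. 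The main obstacle is isolating the single algebraic cancellation that requires the hypothesis $M\ge 3$; this hypothesis is tight, since for $M=2$ a direct check shows the coefficient of $w^0q^3$ in $D$ equals $-1$, so the analogous inequality fails.
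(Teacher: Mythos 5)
Your proposal is correct and follows essentially the same route as the paper: reduce via Corollary~\ref{oddcor} and Proposition~\ref{weirdprop2} to the non-negativity of the difference of the two marked sum sides, observe that the only negative term $-q^{M+1}$ from the $j=1$ contribution is absorbed by the $j=2$ term $q^4[M]_q$, which works exactly when $M\ge 3$. Your added remark that $M=2$ genuinely fails (coefficient of $w^0q^3$ is $-1$) is a nice check of tightness not present in the paper, but the argument is otherwise the same.
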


\begin{proof} By Corollary~\ref{oddcor}, Theorem~\ref{RR1x}, and Theorem~\ref{RR2x} we have
$$
\begin{aligned}
\sum_{n=0}^\infty &\sum_{k=0}^\infty q^n w^k ( E_{\lambda}^{\mathbb{A}}(n,k)-E_{\theta}^{\mathbb{B}}(n,k))\\
=& 
\frac{q-q^{M+1}}{1-wq^M}+\sum_{k=2}^\infty q^{k^2} \frac{[M]_q}{1-wq^M}\frac{1}{(q^2;q)_{k-2}}.
\end{aligned}
$$
All terms are positive except for the first term. If we add the $k=2$ 
term to the first term we have
$$
\frac{q-q^{M+1}+q^4[M]_q}{1-wq^M}
$$
whose numerator is positive for $M\ge 3.$
\end{proof}

\section{Remarks} 

In \cite{O:S} marked versions of the 2nd Rogers-Ramanujan identity are given for
\begin{enumerate}
\item a single part $\{M\},$
\item two parts $\{2,M\},$
\item four parts $\{2,3,7,8\}$.
\end{enumerate}

We do not have a general version of Proposition~\ref{mainprop} 
which gives the last marked version.

A $q$-analogue of Euler's odd=distinct theorem \cite[Theorem 1]{S} is the following.
Let $q$ be a positive integer.
The number of partitions of $N$ into
$q$-odd parts $[2k+1]_q$ is equal to the the number of partitions of $N$ into parts 
$[m]_q$ whose multiplicity is $\le q^m$. A generating function 
identity equivalent to this result is
$$
\prod_{n=0}^\infty \frac{1}{1-t^{[2n+1]_q}}=
1+\sum_{m=1}^\infty t^{[m]_q} \frac{1-t^{q^m[m]_q}}{1-t^{[m]_q}}\prod_{k=1}^{m-1}
\frac{1-t^{(q^k+1)[k]_q}}{1-t^{[k]_q}}.
$$
We do not know how to perturb this identity to mark a part.

Given $\lambda$ and $\mu$, $E_\lambda(\mu)$ is an integer which counts the 
number of $\lambda$'s in $\mu$. One could imagine defining instead 
a rational value for this ``multiplicity".

\end{document}